\theoremstyle{plain}
\newtheorem{thm}{Theorem}[section]
\newtheorem*{thm*}{Theorem}
\newtheorem{lem}[thm]{Lemma}
\newtheorem{prop}[thm]{Proposition}
\newtheorem{cor}[thm]{Corollary}
\theoremstyle{definition}
\newtheorem{exmp}[thm]{Example}
\theoremstyle{remark}
\newtheorem{rem}[thm]{Remark}
\numberwithin{equation}{section}
\numberwithin{figure}{section}
\newcommand\cP{{\mathcal P}}
\newcommand\cB{{\mathcal B}}
\newcommand\cR{{\mathcal R}}
\newcommand\SetOf[2]{\left\{#1\vphantom{#2}\,\right.\left|\,\vphantom{#1}#2\right\}}
\newcommand\smallSetOf[2]{\{#1\,|\,#2\}}
\newcommand\card[1]{\left|#1\right|}
\renewcommand{\phi}{\varphi}
\newcommand{\IB}{{\mathcal{B}_{\textrm{incomp}}}}
\newcommand{\blist}{\mathcal B}
\newcommand{\TRUE}{\texttt{TRUE}}
\newcommand{\FALSE}{\texttt{FALSE}}
\newcommand{\Part}{\operatorname{Part}}
\newcommand{\bigO}{\mathcal O}
\begin{document}

\title{Computing the blocks of a quasi-median graph}

\author{Sven Herrmann \and Vincent Moulton}
\date{\today}
\email{sherrmann@mathematik.tu-darmstadt.de}
\email{Vincent.moulton@cmp.uea.ac.uk}
\address{School of Computing Sciences, University of East Anglia, Norwich, NR4 7TJ, UK}

\thanks{ email address and affiliation details of corresponding author: sherrmann@mathematik.tu-darmstadt.de\\ School of Computing Sciences, University of East Anglia, Norwich, UK\\ The first author was supported by a fellowship within the Postdoc"=Programme of the German Academic Exchange Service (DAAD)}

\begin{abstract}
Quasi-median graphs are a tool commonly used by 
evolutionary biologists to visualise the evolution of molecular 
sequences. As with any graph, a quasi-median graph
can contain cut vertices, that is, vertices whose
removal disconnect the graph. These
vertices induce a decomposition of the graph
into blocks, that is, maximal subgraphs which do 
not contain any cut vertices. Here we show that the
special structure of quasi-median graphs can be 
used to compute their blocks without having
to compute the whole graph.
In particular we present an algorithm 
that, for a collection of $n$ 
aligned sequences of length $m$, 
can compute the blocks of the associated quasi-median graph
together with the information required to correctly connect 	
these blocks together in run time $\bigO(n^2m^2)$, independent
of the size of the sequence alphabet.
Our primary motivation for presenting this algorithm is 
the fact that the quasi-median graph associated to a
sequence alignment must contain all most 
parsimonious trees for the alignment, and
therefore precomputing the blocks of
the graph has the potential to help
speed up \emph{any} method for computing such trees.
\end{abstract}

\keywords{quasi-median graph, median graph, 
most parsimonious trees, Steiner trees, mitochondrial evolution}

\maketitle

\section{Introduction}

Quasi-median graphs are a tool commonly used by 
evolutionary biologists to visualise the evolution of molecular 
sequences, especially mitochondrial sequences
(Schwarz and D"ur~\cite{SD11}; Ayling and Brown~\cite{AB08}; 
Bandelt et al.~\cite{BFSR95}; Huson et al.~\cite[Chapter~9]{HRS10}).
They were introduced by Mulder~\cite[Chapter~6]{M80} and their application to molecular sequence analysis was
introduced for binary sequences in (Bandelt et al.~\cite{BFSR95}) and 
for arbitrary sequences in (Bandelt et al.~\cite{BFR99}).
A quasi-median graph can be constructed for an alignment 
of sequences over any alphabet (Bandelt and D"ur~\cite{BD07}); 
for binary sequences they are 
also known as \emph{median graphs} (Bandelt et al.~\cite{BFSR95}). 
An example of a quasi-median graph associated to 
the hypothetical alignment of sequences $s_1$--$s_9$ 
is presented in Figure~\ref{fig:qm-graph} 
(see Bandelt and D"ur~\cite{BD07} for more details on how to 
construct such graphs).

\begin{figure}[ht]

{\tiny
\begin{tabular}{l|cccccccccccc}
&1&2&3&4&5&6&7&8&9&10&11&12\\
\hline
$s_1$&G&T&A&T&C&A&G&T&A&T&A&T\\
$s_2$&G&T&G&T&C&A&G&T&A&C&G&T\\
$s_3$&A&T&G&T&C&A&A&C&G&C&A&T\\
$s_4$&A&T&G&T&C&A&C&C&A&C&A&C\\
$s_5$&A&C&A&C&T&C&G&C&A&C&A&T\\
$s_6$&A&C&A&C&T&G&G&C&A&C&A&T\\
$s_7$&G&C&G&T&C&A&G&C&A&C&A&T\\
$s_8$&G&T&G&T&C&A&G&T&A&C&A&T\\
$s_9$&A&T&G&T&C&A&C&C&A&C&A&G\\
$s_{10}$&A&C&A&C&C&G&G&C&A&C&A&T
\end{tabular}\hfill\vspace{-3cm}}

\hfill\includegraphics[scale=0.75]{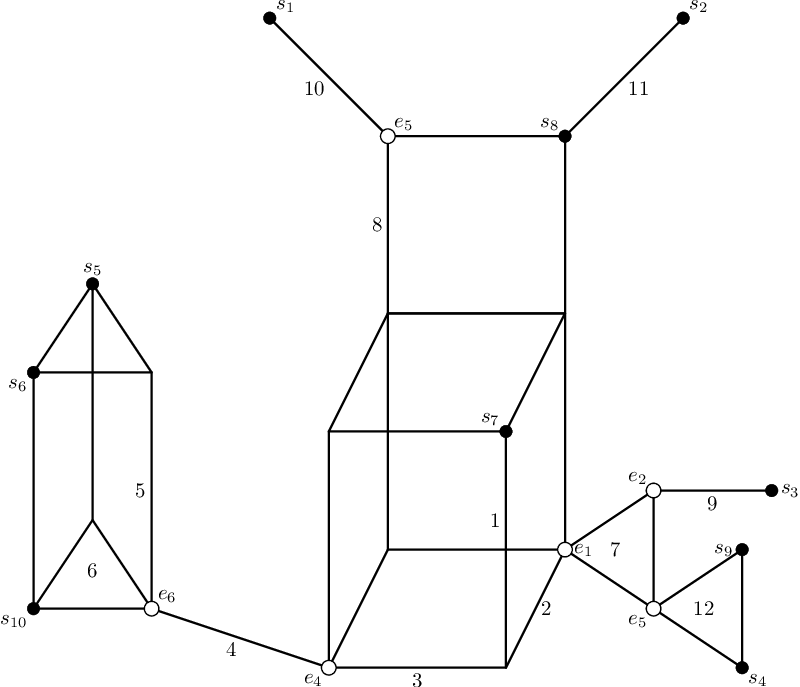}
  \caption{An alignment of hypothetical DNA sequences and
the associated quasi-median graph. The sequences
correspond to the black vertices and the 
columns correspond to the sets of edges, as indicated by the labels.}
  \label{fig:qm-graph}
\end{figure}

Here we are interested in computing the 
cut vertices of a quasi-median graph 
as well as an associated decomposition of the graph.
Recall that given a connected 
graph $G=(V(G),E(G))$, consisting of 
a set $V=V(G)$ of vertices and a set $E=E(G)$ of edges, 
a vertex $v\in V$ is called a \emph{cut vertex} of $G$ 
if the graph obtained by deleting $v$ and all edges 
in $E$ containing $v$ from $G$ is disconnected 
(for the basic concepts in graph theory that we use 
see, for example, (Diestel \cite{D10})). 
For example, in  quasi-median graph
in Figure~\ref{fig:qm-graph} the cut vertices are precisely the white vertices and the black vertex~$s_8$.
As with any graph, the cut vertices of a
quasi-median graph decompose it into {\em blocks}, that is, maximal 
subgraphs which do not contain any cut vertices themselves.
These blocks in turn, together with the information on how 
they are linked together, give rise to 
the \emph{block decomposition} of the graph
(see Section~\ref{sec:block_decomp} for a 
formal definition of this decomposition that we shall use 
which is specific to quasi-median graphs). It is well known, that the block decomposition of a given graph can be computed in linear time from its vertices and edges, however, the size of a quasi-median graph is usually exponential in the size of the sequence alignment.
Therefore, the main purpose of this paper is to provide
an algorithm for computing the 
block decomposition of a quasi-median graph {\em without}
having to compute the whole graph. 

The results in this paper complement the 
well-developed theory of quasi-median networks
(cf., e.g., (Bandelt et al. \cite{BMW94}; 
Imrich and Klav{\v{z}}ar \cite{IK00})). However, 
our primary motivation for computing 
the block decomposition of quasi-median graphs is provided 
by their close connection 
with most parsimonious trees (see, e.g., Felsenstein \cite{F04} for
an overview of parsimony). Indeed, Bandelt and R"ohl~\cite{BR09} 
showed that the set of {\em all} most parsimonious 
trees for a collection of (aligned, gap-free) sequences 
must be contained in the quasi-median graph of 
the sequences (see also (Bandelt~\cite{B01}) for a proof
of this result for median networks). More specifically,
they showed that the most parsimonious trees for the 
sequences are in one-to-one correspondence with 
the Steiner trees for the sequences considered 
as a subset of the vertices of the quasi-median graph.
It easily follows that the block decomposition of
a quasi-median graph can be used to break up
the computation of most parsimonious trees into
subcomputations on the blocks. Of course,
the quasi-median graph of an arbitrary
collection of sequences may not contain any cut vertices but,
as computing most parsimonious trees is NP-hard 
(Foulds and Graham \cite{FG82}),
it could still be a useful pre-processing step 
to compute the cut vertices of quasi-median graphs
before trying to compute most parsimonious trees. Similarly, Misra et al.~\cite{MBRS11} propose an integer linear programme for computing a most parsimonious tree, which is based on the structure of the quasi-median graph (called the \emph{generalised Buneman graph} by the authors). A computation of the block decomposition could be used to decompose the problem into smaller subproblems.

We now summarise the contents of the rest of this paper. 
We begin by presenting some preliminaries concerning 
quasi-median graphs in the next section. Then, in 
Section~\ref{sec:sc_sets}, we recall a 
characterisation of the vertices of
a quasi-median graph given in (Bandelt et al. \cite{MR1907821}), 
which we use in Section~\ref{sec:biject} to 
prove a key structural result for quasi-median graphs
(Theorem~\ref{thm:bijection}).
This result is 
a direct generalisation of Theorem~1 
of (Dress et al. \cite{MR1907821}) for median graphs, and 
states that the blocks in a quasi-median 
graph are in bijection with the connected components of 
a certain graph which can be associated to an alignment
that captures the degree of ``incompatibility''
between its columns. 
Using this result, we also derive a characterisation
of the cut vertices of a quasi-median graph (Theorem~\ref{characterize}).
After defining the block decomposition
of a quasi-median graph in Section~\ref{sec:block_decomp}, we 
present our algorithm for its computation in 
Section~\ref{sec:compute_block} (Algorithm~1).
In particular, we prove that this algorithm correctly
computes the  block decomposition (Theorem~\ref{thm:correct})
and also show that, for a collection of $n$ 
aligned sequences of length $m$, 
the algorithm's run time is $\bigO(n^2m^2)$, independent
of the size of the sequence alphabet (Theorem~\ref{thm:run_time}). We have implemented the algorithm and
it is available for download at
\url{http://www.uea.ac.uk/computing/quasidec}.

\textbf{Acknowledgments} The authors would like to thank the anonymous referees for their helpful comments, especially to one of them for pointing out the argument used in Lemma~\ref{lem:check_comp}. We would also like to thank Andreas Spillner for providing some useful observations concerning this argument.
\section{Preliminaries}

In the following we shall define
quasi-median networks in terms of partitions
rather than sequences, as explained in (Bandelt et al.~\cite{MR1907821}).
It is quite natural to do this since, given a 
multiple sequence alignment as in Figure~\ref{fig:qm-graph}, each 
column of the alignment 
gives rise to a partition of
the set of sequences in which all those sequences 
having the same nucleotide 
in the column are grouped together (note that 
columns with only one nucleotide are usually ignored).
In particular, by also recording the 
number of columns giving rise to a specific
partition, alignments can be recoded in terms
of sets of partitions of the sequences. 
%In this way, the size of
%each partition is bounded by the number of characters of the alphabet.
This whole process is described in more 
detail in, for example, (Bandelt and D\"ur \cite{BD07}).

We now recall how quasi-median networks 
can be defined in terms of partitions. 
For the rest of this paper 
let $X$ denote an arbitrary, non-empty finite set. 
A {\em partition} $P$ of the set $X$ is a collection 
of non-empty subsets of $X$ 
whose union is $X$ and for which $A\cap B=\emptyset$ for 
all $A\not=B\in P$. For $x\in X$ we set $P(x)$ to be the 
unique element of $P$ that contains $x$.

\begin{exmp}\label{ex:one}
Consider the set $X=\{s_1,s_2,\dots,s_{10}\}$ of
sequences given in 
Figure~\ref{fig:qm-graph}. 
The columns labelled $1,\dots, 12$ give rise 
to the partitions 
$P_1,P_2,\dots, P_{12}$ of $X$, respectively. For example,
\[
P_1=\big\{\{s_1,s_2,s_7,s_8\},\{s_3,s_4,s_5,s_6,s_9,s_{10}\}\big\}\,,
\]
\[
P_4=\big\{\{s_1,s_2,s_4,s_5,s_7,s_8,s_9\},\{s_5,s_6,s_{10}\}\big\}\,,
\]
\[
P_6=\big\{\{s_1,s_2,s_3,s_4,s_7,s_8,s_9\},\{s_5\},\{s_6,s_{10}\}\big\}\,
\]
and the element of $P_7$ containing $s_6$ is given by
\[
P_7(s_6)=\{s_1,s_2,s_5,s_6,s_7,s_8,s_{10}\}.
\]
\end{exmp}

Let $\cP$ be an arbitrary set 
of partitions of $X$, also called {\em partition system on $X$}. 
A {\em $\cP$-map} 
is a map $v:\cP\to 2^X$ that maps every 
partition in $\cP$ to one of its parts. 
Note that, given any $x \in X$, the
map $v_x:\cP\to 2^X$ given by 
setting $v_x(P)=P(x)$ for $P \in \cP$ is a $\cP$-map.
In particular, we obtain a map $\pi: x\mapsto v_x$ from $X$ 
to the set of all possible $\cP$-maps. 

Now, given any three 
$\cP$-maps $v_1,v_2,v_3$, 
the \emph{quasi-median} $q(v_1,v_2,v_3)$ 
is defined to be the $\cP$-map 
\[
P\mapsto\begin{cases}
v_2(P),&\text{if }v_2(P)=v_3(P),\\
v_1(P),&\text{otherwise}
\end{cases}
\]
for $P \in \cP$.
The \emph{quasi-median hull} $H(\Phi)$ of a set 
$\Phi$ of $\cP$-maps is the smallest set of $\cP$-maps 
closed under taking quasi-medians, or, more formally, 
$H(\Phi)=\bigcup_{i\geq 0} H_i(\Phi)$, where
\[
H_0(\Phi)=\Phi\quad\text{and}\quad H_i(\Phi)
=\SetOf{q(v_1,v_2,v_3)}{v_1,v_2,v_3\in H_{i-1}(\Phi)}\,.
\]
The \emph{quasi-median graph} $Q(\cP)$ of a partition system $\cP$ 
on $X$ has vertex set $H(\pi(X))$ and 
edge set consisting of all those pairs $\{v_1,v_2\}$
of $\cP$-maps in $H(\pi(X))$ that differ on precisely one partition, 
that is, $\card{\smallSetOf{P\in\cP}{v_1(P)\not=v_2(P)}}=1$.
By this definition, for each edge $E=\{v_1,v_2\}$ of $Q(\cP)$, there exists precisely one partition $P(E)\in\cP$ with $v_1(P(E))\not=v_2(P(E))$ and, on the other hand, given a partition $P\in\cP$ we find an associated set $E(P)=\smallSetOf{\{v_1,v_2\}\in E(Q(\cP))}{v_1(P)\not=v_2(P)}$ of edges of $Q(\cP)$ \cite{MR1907821}. Removing all $E\in E(P)$ (without removing any vertices of $Q(\cP)$) yields a graph with $k:=\card P$ connected components $K_1,\dots,K_k$, such that
\[
\big\{\smallSetOf{x\in X}{v_x\in V(K_i)}\;|\;1 \leq i\leq k\big\}=P.
\]

\begin{exmp} 
The quasi-median graph of the partition system described in 
Example~\ref{ex:one} is depicted in Figure~\ref{fig:qm-graph}; the 
map $\pi$ gives the labelling of the 
black vertices in the graph by the sequences $s_1$ to $s_{10}$.
For example, the vertex $e_4$ maps 
partition $P_3$ to $\{s_1,s_5,s_6,s_{10}\}$, $P_4$ to 
$\{s_1,s_2,s_3,s_4,s_7,s_8,s_9\}$ and 
partition $P_6$ to $\{s_1,s_2,s_3,s_4,s_7,s_8,s_9\}$.

The vertex $e_6$ maps $P_4$ to $\{s_5,s_6,s_{10}\}$, showing, that $P(\{e_4,e_6\})=P_4$ and one can check that there is no other $E\in E(Q(\cP))$ with $P(E)=P_4$. So $E(P)=\{\{e_4,e_6\}\}$ and removing the edge $\{e_4,e_6\}$ from the graph gives to connected components corresponding to the two elements of $P_4$.
\end{exmp}

\section{Strong compatibility and quasi-median graphs}\label{sec:sc_sets}

We now consider a 
concept that is useful for understanding the
structure of quasi-median graphs 
(cf. (Bandelt et al. \cite{MR1907821})).
Two partitions $P,Q$ of $X$ are called 
\emph{strongly compatible} if either $P=Q$ or there 
exist $A\in P, B\in Q$ such that $A\cup B=X$ 
(see Dress et al.~\cite[p.3]{MR1453403}). Obviously, 
if distinct partitions $P,Q$ of $X$ are 
strongly compatible, then the 
sets $A$ and $B$ are necessarily unique;
we set $B(P,Q)=A$ and $B(Q,P)=B$. The following
observation concerning these sets will be useful later.

\begin{lem}\label{lem:Bequal}
Let $P,Q,R$ be distinct partitions of a set $X$ 
such that $P$ and $Q$ are not strongly compatible 
and $P,Q$ are both strongly compatible with $R$. 
Then $B(R,P)=B(R,Q)$.
\end{lem}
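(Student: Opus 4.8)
The plan is to argue by contradiction, exploiting the fact that two distinct parts of a single partition are disjoint. First I would unwind the notation. Since $P$ and $R$ are distinct and strongly compatible, the parts $B(P,R)\in P$ and $B(R,P)\in R$ satisfy $B(P,R)\cup B(R,P)=X$; similarly $B(Q,R)\in Q$ and $B(R,Q)\in R$ satisfy $B(Q,R)\cup B(R,Q)=X$. To lighten the notation, write $A_P=B(P,R)$, $C_P=B(R,P)$, $A_Q=B(Q,R)$, and $C_Q=B(R,Q)$, so that $A_P\cup C_P=X=A_Q\cup C_Q$ with $A_P\in P$, $A_Q\in Q$, and $C_P,C_Q\in R$. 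The goal is then precisely to show $C_P=C_Q$.

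Suppose, for contradiction, that $C_P\neq C_Q$. Since both are parts of the single partition $R$, they are disjoint, that is, $C_P\cap C_Q=\emptyset$. From $A_Q\cup C_Q=X$ we get $X\setminus C_Q\subseteq A_Q$, and since $C_P$ is disjoint from $C_Q$ we have $C_P\subseteq X\setminus C_Q$, hence $C_P\subseteq A_Q$. Symmetrically, $C_Q\subseteq A_P$.

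The key step is then immediate: using $C_P\subseteq A_Q$ together with $A_P\cup C_P=X$, we obtain
\[
A_P\cup A_Q \supseteq A_P\cup C_P = X,
\]
so that $A_P\cup A_Q=X$. But $A_P\in P$ and $A_Q\in Q$, which means that $P$ and $Q$ possess parts whose union is all of $X$; as $P\neq Q$, this says exactly that $P$ and $Q$ are strongly compatible, contradicting the hypothesis. Hence $C_P=C_Q$, that is, $B(R,P)=B(R,Q)$.

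Since every step is elementary set algebra, I expect no serious obstacle. The only point requiring care is the bookkeeping of the convention that the first argument of $B(\cdot,\cdot)$ records which partition the returned part belongs to, so that $C_P=B(R,P)$ and $C_Q=B(R,Q)$ genuinely both lie in $R$; this is what makes the disjointness-of-parts step legitimate and drives the whole argument.
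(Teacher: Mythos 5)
Your proof is correct and follows essentially the same route as the paper's: assume $B(R,P)\neq B(R,Q)$, use disjointness of distinct parts of $R$ together with the covering conditions to deduce that $B(P,R)\cup B(Q,R)=X$, contradicting the non-strong-compatibility of $P$ and $Q$. The only cosmetic difference is that the paper derives $B(R,Q)\subseteq B(P,R)$ while you derive the symmetric inclusion $B(R,P)\subseteq B(Q,R)$; both lead to the identical contradiction.
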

\begin{proof}
Since $R$ and $P$ are strongly compatible, we have 
$B(R,P)\cup B(P,R)=X$. If $B(R,Q)\not=B(R,P)$, this 
implies $B(R,Q)\subseteq B(P,R)$. So we get 
$B(Q,R)\cup B(P,R)\supseteq B(Q,R)\cup B(R,Q)=X$; 
a contradiction to $P$ and $Q$ not being strongly compatible.
\end{proof}

A partition system $\cP$ on $X$ is called 
\emph{strongly compatible} if each $P,Q\in \cP$ 
are strongly compatible. The following 
result, which is shown in the 
proof of Dress et al.~\cite[Lemma 3.1]{MR2726316}, will be useful 
later on for obtaining bounds on the number 
of cut vertices in a quasi-median graph.

\begin{prop}\label{prop:sc-cardinality}
Let $X$ be a set of cardinality $n\geq 2$ and $\cP$ 
be a strongly compatible set of partitions of $X$. 
Then $\card \cP\leq 3n-5$.
\end{prop}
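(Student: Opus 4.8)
The plan is to argue by induction on $n=\card{X}$. The target value is explained by the extremal configurations, which are the partition systems \emph{displayed} by a trivalent $X$-tree, each edge contributing its split of $X$ and each internal vertex the partition of $X$ into its incident subtrees: such a tree has $2n-3$ edges and $n-2$ internal vertices, and $(2n-3)+(n-2)=3n-5$. For the base case $n=2$ the only nontrivial partition of $X=\{x,y\}$ is $\{\{x\},\{y\}\}$, so $\card{\cP}\le 1=3\cdot 2-5$. For the inductive step fix $x\in X$, put $X'=X\setminus\{x\}$, and let $\rho$ send a partition $P$ of $X$ to its restriction $P|_{X'}$. Restriction preserves strong compatibility (from $A\cup B=X$ one gets $(A\cap X')\cup(B\cap X')=X'$), so I may apply the induction hypothesis on $X'$, where $\card{X'}=n-1\ge 2$.

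The real work is to control the non-injectivity of $\rho$ and the partitions it kills. First split $\cP$ into those $P$ in which $\{x\}$ is a block (\emph{singleton type}) and the rest. One checks directly that two distinct singleton-type partitions with nontrivial restriction are never strongly compatible: covering $x$ in $A\cup B=X$ forces $A$ or $B$ to equal $\{x\}$, whence the other side must be all of $X'$, which is impossible for nontrivial restrictions. So at most one such partition occurs, and together with the split $\{\{x\},X'\}$ this yields at most two singleton-type partitions. For the remaining (non-singleton) partitions the restriction is nontrivial, so their $\rho$-images form a strongly compatible system of nontrivial partitions of $X'$, bounded in number by $3(n-1)-5=3n-8$ by induction.

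It remains to bound the fibres of $\rho$ over these non-singleton images. A short local check shows each such fibre has size at most $2$, and size exactly $2$ only over a two-block partition (a split) $\{C_1,C_2\}$ of $X'$ both of whose extensions $\{C_1\cup\{x\},C_2\}$ and $\{C_1,C_2\cup\{x\}\}$ lie in $\cP$; these extensions are themselves splits of $X$, and I call such a split \emph{doubled}. Writing $D$ for the number of doubled splits, this gives $\card{\cP}\le (3n-8)+D+2$, so it suffices to prove $D\le 1$.

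The crux, and the step where global strong compatibility is genuinely used, is this uniqueness. Suppose $\{C_1,C_2\}$ and $\{D_1,D_2\}$ were two distinct doubled splits. As compatible splits of $X'$ we may relabel so that $C_2\cap D_2=\emptyset$, giving $C_2\subseteq D_1$ and $D_2\subseteq C_1$. The extensions $\{C_1,C_2\cup\{x\}\}$ and $\{D_1,D_2\cup\{x\}\}$ both lie in $\cP$ and so are compatible splits of $X$; among their four pairwise intersections three are nonempty, as they contain $D_2$, $C_2$, and $x$ respectively, so compatibility forces the fourth, $C_1\cap D_1=\emptyset$. Together with $C_2\cap D_2=\emptyset$ this gives $C_1=D_2$ and $C_2=D_1$, i.e. the two splits coincide, a contradiction. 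Hence $D\le 1$ and $\card{\cP}\le 3n-5$, completing the induction. I expect this doubled-split argument to be the main obstacle, since everything else is either bookkeeping or a local check on a single pair of partitions.
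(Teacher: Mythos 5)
Your proof is correct, and it is worth noting that the paper itself contains no proof of this proposition: it is quoted as being ``shown in the proof of \cite[Lemma 3.1]{MR2726316}'', so your argument is a genuine self-contained derivation rather than a reproduction of anything in the text. I checked the three substantive steps and they all hold. (i) Two distinct partitions having $\{x\}$ as a part and nontrivial restriction to $X'$ cannot be strongly compatible: the witnessing pair $A\cup B=X$ must cover $x$, forcing one of $A,B$ to equal $\{x\}$ and the other to equal $X'$, which would make that partition $\{\{x\},X'\}$; so there are at most two singleton-type partitions. (ii) If two distinct non-singleton-type partitions have the same restriction $R$, write them as $R$ with $x$ added to parts $C_P\neq C_Q$; the witness part containing $x$ must be $C_P\cup\{x\}$ (say), and then the other witness must be $C_Q\cup\{x\}$ (any other part of the second partition would have to contain $C_Q$, contradicting disjointness), which forces $X'=C_P\cup C_Q$ -- so $R$ is a split and the fibre consists of its two extensions, giving fibre size at most $2$. (iii) Your doubled-split uniqueness argument is sound: for the extensions $\{C_1,C_2\cup\{x\}\}$ and $\{D_1,D_2\cup\{x\}\}$, the three cross-intersections containing $D_2$, $C_2$ and $x$ are indeed nonempty, so strong compatibility of these two splits of $X$ forces the fourth, $C_1\cap D_1=\emptyset$, and together with $C_2\cap D_2=\emptyset$ this makes the two splits of $X'$ coincide. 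The count $\card{\cP}\le 2+(3(n-1)-5)+D$ with $D\le 1$ then yields $3n-5$, and your tree picture ($2n-3$ edges plus $n-2$ internal vertices of a trivalent $X$-tree) correctly identifies the extremal configurations, which is consistent with Theorem~\ref{thm:non-compatible}~(i).

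One caveat you should make explicit: your induction silently assumes that every member of $\cP$ is a \emph{nontrivial} partition (at least two parts) -- you use this in the base case and when applying the induction hypothesis to the restrictions. The assumption is in fact necessary, not merely convenient: the trivial partition $\{X\}$ is strongly compatible with every partition, so admitting it already breaks the bound at $n=2$, where $\{\{X\},\{\{x\},\{y\}\}\}$ is strongly compatible of size $2>3\cdot 2-5$. This is certainly the intended reading (in the paper's setting, constant columns are discarded, so all partitions arising have at least two parts), but since Proposition~\ref{prop:sc-cardinality} does not state it, you should record the hypothesis at the start of your proof.
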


We now consider a graph that will 
be key for our description of the 
block decomposition of a quasi-median graph.
The \emph{non-strong-compatibility graph} for
a partition system $\cP$ on $X$ 
(Bandelt and D"ur~\cite{BD07})) is the graph with 
vertex set $\cP$ and edge set 
\[
\smallSetOf{\{P,Q\}}{P\text{ and }Q\text{ are \emph{not} 
strongly compatible}}\,.
\] 
Properties of this graph have also been 
considered in (Schwarz and D"ur~\cite{SD11}).

\begin{exmp}
We continue Example~\ref{ex:one}. The non-strong-compatibility graph 
of the partition system is depicted in Figure~\ref{fig:ic-graph}. 
For example, the partitions $P_1$ and $P_5$ are strongly 
compatible with $B(P_1,P_5)=\{s_3,s_4,s_5,s_6,s_9,s_{10}\}$, 
$B(P_5,P_1)=\{s_1,s_2,s_3,s_4,s_7,s_8,s_9,s_{10}\}$. 
Similarly, $P_1$ and $P_6$ are not strongly compatible and -- 
as required by Lemma~\ref{lem:Bequal} -- $B(P_1,P_6)=B(P_1,P_5)$.
On the other hand, $P_3$ and $P_8$ are not strongly 
compatible, as we cannot find elements of the partitions 
whose union is $X$, which gives the edge $\{3,8\}$ 
in the non-strong-compatibility graph.
\end{exmp}

\begin{figure}[hbt]
  \includegraphics[scale=0.75]{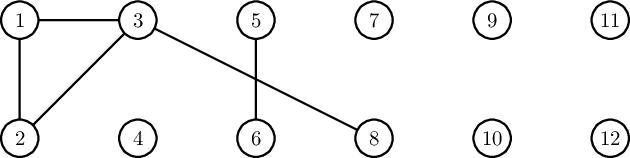}
  \caption{The non-strong-compatibility graph for the set of partitions 
           in Example~\ref{ex:one}. A vertex labelled $i$ corresponds to 
           partition $P_i$, $1 \le i \le 12$.}
  \label{fig:ic-graph}
\end{figure}

We now present some useful links between strong compatibility and
quasi-median graphs. The following result was
proved in (\cite[Theorem~1]{MR1907821}).

\begin{thm}\label{thm:qm-characterisation}
Let $\cP$ be a set of partitions of $X$. Then a 
$\cP$-map $\phi$ is a vertex of the quasi-median 
graph $Q(\cP)$ if and only if for every pair of 
distinct, strongly compatible partitions 
$P_1,P_2\in \cP$ either $\phi(P_1)=B(P_1,P_2)$ or 
$\phi(P_2)=B(P_2,P_1)$.
\end{thm}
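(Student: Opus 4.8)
The plan is to prove the two implications separately, writing $A=B(P_1,P_2)$ and $B=B(P_2,P_1)$ for a pair of distinct strongly compatible partitions $P_1,P_2$, so that $A\in P_1$, $B\in P_2$ and $A\cup B=X$. Throughout I call a $\cP$-map \emph{admissible} if it satisfies the stated condition for every such pair; the claim is that the vertex set $H(\pi(X))$ equals the set of admissible maps.

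For the necessity direction (every vertex is admissible) I would argue by induction along the filtration $H(\pi(X))=\bigcup_{i\ge0}H_i(\pi(X))$. The base case amounts to checking that each generator $v_x$ is admissible: since $A\cup B=X$ we have $x\in A$ or $x\in B$, and in the two cases $v_x(P_1)=P_1(x)=A$ or $v_x(P_2)=P_2(x)=B$, respectively. For the inductive step it suffices to show that admissibility is preserved by the quasi-median. Assuming $v_1,v_2,v_3$ admissible and $q=q(v_1,v_2,v_3)$, suppose for contradiction that $q(P_1)\neq A$ and $q(P_2)\neq B$ for some pair as above, and distinguish whether $v_2$ and $v_3$ agree on $P_1$ and on $P_2$. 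If they agree on $P_1$, then $q(P_1)=v_2(P_1)=v_3(P_1)\neq A$, so admissibility of $v_2$ and $v_3$ forces $v_2(P_2)=v_3(P_2)=B$ and hence $q(P_2)=B$, a contradiction; the remaining cases (they disagree on $P_1$, and then either agree or disagree on $P_2$) are handled the same way, each time using that $q$ copies $v_1$ on a coordinate where $v_2,v_3$ disagree and copies their common value where they agree. Since $H_0$ consists of generators and each $H_i$ is built from quasi-medians, every vertex is admissible.

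The sufficiency direction is the substantial one, and here the plan is to induct on $\card\cP$. Fix an admissible $\phi$, choose a partition $P_0$, and set $\cP'=\cP\setminus\{P_0\}$. The restriction $\phi|_{\cP'}$ is again admissible (strongly compatible pairs in $\cP'$ are strongly compatible in $\cP$), so by the inductive hypothesis it is a vertex of $Q(\cP')$. Because the quasi-median is defined coordinatewise, restriction commutes with $q$, and hence the restriction map sends $H(\pi(X))$ onto the hull of the restricted generators; consequently there is a vertex $\hat\phi\in H(\pi(X))$ that agrees with $\phi$ on all of $\cP'$. If $\hat\phi(P_0)=\phi(P_0)$ we are done. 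Otherwise I would correct the single coordinate $P_0$ by exhibiting two elements $x,y\in\phi(P_0)$ with the property that $Q(x)=Q(y)$ implies $Q(x)=\phi(Q)$ for every $Q\in\cP$; a direct computation then shows $q(\hat\phi,v_x,v_y)=\phi$, since on $P_0$ the generators $v_x,v_y$ agree with common value $\phi(P_0)$, while on any $Q\in\cP'$ the quasi-median either copies the correct common value of $v_x,v_y$ or, where they disagree, copies $\hat\phi(Q)=\phi(Q)$.

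The main obstacle is precisely the existence of such a pair $x,y$, and this is where the admissibility of $\phi$ and strong compatibility must be used in earnest. The idea is to pick $x\in\phi(P_0)$ minimizing the number of partitions on which $v_x$ disagrees with $\phi$, and then to find a second point $y\in\phi(P_0)$ that lies in a different part than $x$ on every partition where $x$ is still incorrect, so that $x$ and $y$ never fall together into a part other than the one selected by $\phi$; Lemma~\ref{lem:Bequal} together with the identity $A\cup B=X$ are the natural tools for controlling where such points can lie. Care is needed when $\phi(P_0)$ is too small to contain a useful pair, which I would handle by choosing $P_0$ judiciously (it suffices to find one partition for which the correction step succeeds) or, in the extremal case, by showing that $\phi$ is itself a generator. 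I expect verifying this pairing claim to be the only genuinely delicate part of the argument.
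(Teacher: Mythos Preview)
The paper does not contain a proof of this theorem: it is quoted verbatim from \cite[Theorem~1]{MR1907821} and used as a black box throughout. So there is no ``paper's own proof'' to compare your proposal against.

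Evaluating your proposal on its own terms: the necessity direction is complete and correct; closure of the admissibility condition under the quasi-median operation is exactly the right mechanism, and your case analysis covers it. For sufficiency, the inductive scheme (restrict to $\cP'=\cP\setminus\{P_0\}$, lift via surjectivity of restriction $H_\cP(\pi(X))\to H_{\cP'}(\pi(X))$, then correct the single remaining coordinate by a quasi-median with two generators) is sound in outline, and your verification that $q(\hat\phi,v_x,v_y)=\phi$ once suitable $x,y$ exist is correct.

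The acknowledged gap is real, however, and not merely a matter of bookkeeping. The existence of $x,y\in\phi(P_0)$ with the required separation property can fail for a particular choice of $P_0$: if $\phi(P_0)$ is a singleton $\{z\}$ you are forced to take $x=y=z$, which only works if $\phi=v_z$, and admissibility alone does not force this when $P_0$ is not strongly compatible with some other $Q\in\cP$. Your fallback of choosing $P_0$ ``judiciously'' is the right instinct, but it needs an argument: you must show that for \emph{some} $P_0$ either the correction step succeeds or $\phi$ already coincides with $\hat\phi$. One workable route is to pick $P_0$ so that $\phi(P_0)\neq B(P_0,Q)$ for as many strongly compatible $Q$ as possible (or, dually, so that $\phi(P_0)$ is inclusion-minimal among the selected parts); Lemma~\ref{lem:Bequal} then lets you locate $y$ inside $\phi(P_0)\setminus\bigcup_{Q\in D(x)}Q(x)$. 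Until that step is written out, the sufficiency direction remains a plan rather than a proof.
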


%An immedeate consequence of this theorem is that 
%for any $P\in \cP'$ we get an embedding 
%$\alpha: \phi'\to \phi$ from $Q(\cP\setminus \{P\})$ 
%to $Q(\cP)$ by setting $\phi(P')=\phi'(P')$ for all $P'\not=P$ and
%\begin{align}\label{eq:embedding}
%\phi(P)=
%\end{align}

Denote the complete graph on $n$ vertices by $K_n$,
and, for two graphs $G,H$, let $G\square H$
denote the (Cartesian) \emph{product} 
of $G$ and $H$, that is, the graph with vertex 
set $V(G)\times V(H)$ and edge set 
$\smallSetOf{\{(u,v),(u,w)\}}{\{v,w\}\in E(H)}
\cup \smallSetOf{\{(u,w),(v,w)\}}{\{u,v\}\in E(G)}$. 
In the extreme case of pairwise strong-compatibility
and non strong-compatibility
for a set of partitions, we have the 
following descriptions of the quasi-median graph 
(see Bandelt et al.~\cite[Theorem~2, Corollary~1]{MR1907821}).

\begin{thm}\label{thm:non-compatible}
Let $\cP$ be a set of partitions of $X$. Then
\begin{itemize}
\item[(i)] If every pair $P,Q\in\cP$ is
strongly compatible, then $Q(\cP)$ is a block graph, that is,
every block in $Q(\cP)$ is isomorphic to a complete graph.
\item[(ii)] If no distinct $P,Q\in\cP$ are strongly compatible,
then $Q(\cP)$ is isomorphic to $\square_{P\in\cP} K_{\card{P}}$.
\end{itemize}
\end{thm}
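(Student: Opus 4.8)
The plan is to prove both parts by combining the vertex characterisation of Theorem~\ref{thm:qm-characterisation} with the definition of the quasi-median graph, building up to the structural statements about blocks. First I would address part~(ii), since it is the more concrete of the two and sets up the machinery. When no two distinct partitions are strongly compatible, Theorem~\ref{thm:qm-characterisation} imposes no constraints at all on a $\cP$-map, so \emph{every} $\cP$-map is a vertex of $Q(\cP)$; that is, $H(\pi(X))$ equals the full set of maps $v:\cP\to P(X)$ assigning to each $P$ one of its parts. This set is exactly the Cartesian product $\prod_{P\in\cP} P$, and the map sending a vertex $v$ to the tuple $(v(P))_{P\in\cP}$ is the natural bijection onto $\prod_{P\in\cP} V(K_{\card P})$. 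It remains to check that this bijection is a graph isomorphism: two $\cP$-maps are adjacent in $Q(\cP)$ precisely when they differ on exactly one partition, and two tuples are adjacent in $\square_{P\in\cP} K_{\card P}$ precisely when they differ in exactly one coordinate, which is the same condition by definition of the Cartesian product of complete graphs. So~(ii) follows once I verify that $H(\pi(X))$ really is all of $\prod_{P\in\cP}P$, which amounts to checking that $\pi(X)$ is large enough that its quasi-median hull exhausts the product.

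For that last point I would argue that the set $\pi(X)=\{v_x:x\in X\}$ already hits every part of every partition (since $v_x(P)=P(x)$ ranges over all parts of $P$ as $x$ ranges over $X$), and then show that the quasi-median operation lets one freely recombine coordinates. Concretely, given two $\cP$-maps $v_1,v_2$ in the hull that agree on all partitions except one, the quasi-median $q(v_1,v_2,v_2)$ produces the map agreeing with $v_2$ everywhere they agree and switching the disputed coordinate, so one can independently adjust coordinates one at a time; iterating this from the maps $v_x$ reaches every tuple in $\prod_{P\in\cP}P$. This is the key computational lemma and I expect it to be where the bookkeeping lives, though it is essentially the standard argument that medians generate the full Hamming cube / product.

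For part~(i), where every pair of partitions is strongly compatible, I would invoke Theorem~\ref{thm:qm-characterisation} to describe the vertices and then argue structurally that each block is complete. A clean route is to show that $Q(\cP)$ contains no induced $4$-cycle (more precisely, that adjacent edges forced by distinct partitions cannot close up into a square without the diagonal being present): the strong-compatibility constraint $\phi(P_1)=B(P_1,P_2)$ or $\phi(P_2)=B(P_2,P_1)$ ties together the allowable values on any two partitions, so whenever a vertex is incident to two edges flipping distinct coordinates, the compatibility condition forces the existence of the $\cP$-map realising the ``opposite corner'' together with all three remaining vertices of the resulting clique. A graph in which every block is a complete graph is exactly a \emph{block graph}, and these are characterised as the connected graphs with no induced cycle of length $\ge 4$ and no induced diamond (equivalently, every biconnected subgraph is complete); so establishing that $Q(\cP)$ is $K_4$-minus-an-edge--free and has no induced long cycles yields the claim. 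The hard part will be part~(i): translating the pairwise strong-compatibility hypothesis through the vertex characterisation into the local clique-completion property, i.e.\ verifying that any two edges sharing a vertex extend to a complete graph, since this is where the combinatorics of the sets $B(P,Q)$ genuinely enters rather than the purely formal product structure of part~(ii).
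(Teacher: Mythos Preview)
The paper does not actually prove this theorem: it is quoted from \cite[Theorem~2, Corollary~1]{MR1907821} with no argument given. So there is no ``paper's own proof'' to compare against, and your proposal should be judged on its own.

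For part~(ii) your first paragraph is already a complete proof: Theorem~\ref{thm:qm-characterisation} is an if-and-only-if statement, and when no two distinct partitions are strongly compatible its condition is vacuous, so the vertex set of $Q(\cP)$ is \emph{exactly} the set of all $\cP$-maps, i.e.\ $\prod_{P\in\cP}P$. The edge check is then immediate. Your second paragraph, attempting to verify directly that $H(\pi(X))$ exhausts the product, is therefore unnecessary---and it also contains an error: $q(v_1,v_2,v_2)(P)=v_2(P)$ for every $P$ (since $v_2(P)=v_2(P)$ trivially), so this quasi-median just returns $v_2$ and does not ``switch the disputed coordinate'' as you claim. If you did want a hands-on argument you would need a genuinely different combination, but there is no need.

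For part~(i) your plan has a real gap. You propose to show that two edges at a common vertex, corresponding to distinct partitions $P_1,P_2$, extend to a clique containing the ``opposite corner''. But in a block graph this is exactly what does \emph{not} happen: edges from distinct partitions lie in distinct blocks, and the shared vertex is a cut vertex separating them, so no opposite corner exists. Concretely, if $v$ has neighbours $w_1$ (differing on $P_1$) and $w_2$ (differing on $P_2$), then $w_1(P_1)\neq B(P_1,P_2)$ forces $w_1(P_2)=B(P_2,P_1)=v(P_2)$; the would-be fourth vertex $u$ with $u(P_1)=w_1(P_1)$ and $u(P_2)=w_2(P_2)$ would violate Theorem~\ref{thm:qm-characterisation} and is not in $Q(\cP)$. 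The correct picture (and the one implicit in the later Theorem~\ref{thm:bijection}) is that in the fully strongly compatible case each block consists of the edges of a \emph{single} partition $P$ and is isomorphic to $K_{|P|}$. Your forbidden-subgraph strategy can be made to work, but the argument must go the other way: use strong compatibility to show there are no $4$-cycles at all (induced or otherwise) involving two distinct partitions, hence every block is confined to one partition and is complete.
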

%\begin{proof}
%In this special case, Theorem~\ref{thm:qm-characterisation} 
%implies that the vertex set of $Q(\cP)$ simply consists of 
%all maps $\phi:\cP\to P(X)$ with $\phi(P)\in P$ 
%for all $P\in \cP$ and is canonically isomorphic 
%to $\times_{P\in\cP} \{1,\dots,\card P\}$. Two 
%vertices $\phi,w$ of $Q(\cP)$ are connected if 
%and only if $\phi$ and $w$ only differ in one 
%coordinate, which exactly corresponds to the definition 
%of the Cartesian product.
%\end{proof}

\section{Cut vertices and blocks in the quasi-median graphs}\label{sec:biject}

We now turn to understanding the cut-vertices and 
blocks of a quasi-median graph. 
By definition, for each edge $e=\{v_1,v_2\}$ 
of the quasi-median graph of a set of partitions $\cP$ 
of $X$, there exists exactly 
one $P\in\cP$ such that $v_1(P)\not=v_2(P)$. 
We say that $P$ is the {\em partition corresponding 
to $e$}.  Given a block $B$ of $Q(\cP)$ we denote by  
$\cP(B)$ the set of all $P\in\cP$ that correspond to 
some edge of $B$. The following result that 
relates the connected components of 
the non-strong-compatibility graph of $\cP$
with the blocks of $Q(\cP)$ is the key component 
to all that follows. Note that it
has been proved in the special case where all 
partitions in $\cP(B)$ have cardinality two in Dress et al.~\cite{DHKM11}.

\begin{thm}\label{thm:bijection}
Let $X$ be a finite set and $\cP$ be a 
partition system on $X$. Then the blocks of the 
quasi-median graph of $\cP$ are in bijection 
with the connected components of the non-strong-compatibility 
graph of $\cP$. More specifically, a bijection 
is given by mapping each block $B$ of the 
quasi-median graph  $Q(\cP)$ to the (necessarily) connected component 
of the non-strong-compatibility graph 
whose vertex set equals $\cP(B)$.
%containing all partitions in $\cP(B)$.
\end{thm}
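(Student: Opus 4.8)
The plan is to prove the two structural facts that are the general forms of the two extremes in Theorem~\ref{thm:non-compatible}: a \emph{gluing} statement, that non-strongly-compatible partitions always label edges of a common block, and a \emph{separation} statement, that a disconnection of the non-strong-compatibility graph is always reflected by a cut vertex of $Q(\cP)$. Throughout I would use Theorem~\ref{thm:qm-characterisation} as the working description of the vertex set: a $\cP$-map is a vertex of $Q(\cP)$ precisely when, for every strongly compatible pair $R,S$, it sends $R$ to $B(R,S)$ or $S$ to $B(S,R)$. I also assume, as in the setup, that each $P\in\cP$ has at least two parts, so that every partition labels at least one edge.

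\emph{Gluing.} First I would show that if $P,Q\in\cP$ are distinct and \emph{not} strongly compatible, then some block $B$ satisfies $P,Q\in\cP(B)$. Since $P,Q$ are not strongly compatible, both have at least two parts, so I may choose distinct $A,A'\in P$ and $C,C'\in Q$. The idea is to exhibit a single $\cP$-map $\phi_0$ on $\cP\setminus\{P,Q\}$ whose four extensions assigning $(A,C),(A',C),(A',C'),(A,C')$ to $(P,Q)$ are all vertices; these form an induced $4$-cycle whose edges are labelled $P,Q,P,Q$ in order, so two edges labelled $P$ and $Q$ lie on a common cycle and hence in a common block. For the extensions to be vertices \emph{regardless} of the values on $P$ and $Q$, the characterisation forces $\phi_0(R)=B(R,P)$ for every $R$ strongly compatible with $P$ and $\phi_0(R)=B(R,Q)$ for every $R$ strongly compatible with $Q$; the crucial point is that these prescriptions are \emph{consistent}, since whenever $R$ is strongly compatible with both, Lemma~\ref{lem:Bequal} yields $B(R,P)=B(R,Q)$. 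Here I would verify that this partial assignment extends to an actual vertex and that the remaining conditions among third partitions hold. Chaining this along paths of the non-strong-compatibility graph then shows that all partitions of one connected component lie in $\cP(B)$ for a common block $B$; this step also needs the auxiliary fact that all edges carrying a fixed label lie in a single block, which I would prove by a companion square argument (completing an arbitrary edge labelled $P$ to a $P$--$Q$ square for some $Q$ not strongly compatible with $P$, or, if $P$ is isolated in the graph, arguing locally via Theorem~\ref{thm:non-compatible}(i)). Thus $\cP(B)$ contains the whole component of any of its members.

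\emph{Separation.} Conversely, I would show that partitions in distinct components never share a block. Fix a component $\cC$ with partition set $\cP_\cC$ and set $\cP_2=\cP\setminus\cP_\cC$, so every $P\in\cP_\cC$ is strongly compatible with every $Q\in\cP_2$. Walking along a non-strong-compatibility path inside $\cC$ and applying Lemma~\ref{lem:Bequal} with $R=Q$ shows that the part $b_Q:=B(Q,P)$ is independent of $P\in\cP_\cC$, so each $Q\in\cP_2$ has a well-defined part ``facing'' $\cC$. Using these facing parts I would construct a cut vertex $z$ with $z(Q)=b_Q$ for all $Q\in\cP_2$, lying on the boundary of the $\cP_\cC$-part, and argue that every path from an edge labelled in $\cP_\cC$ to an edge labelled in $\cP_2$ must pass through $z$, because leaving the $\cP_\cC$-part forces some coordinate $Q\in\cP_2$ to move off $b_Q$. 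Consequently no block meets both $\cP_\cC$ and $\cP_2$, so $\cP(B)$ is contained in a single component.

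\emph{Conclusion and main obstacle.} Combining the two directions, $\cP(B)$ equals the vertex set of a single connected component of the non-strong-compatibility graph; since the components partition $\cP$, distinct blocks yield distinct components and every component is realised, giving the bijection $B\mapsto\cP(B)$ exactly as stated. I expect the separation step to be the main obstacle: realising the gate as a \emph{single} vertex $z$ and proving it genuinely disconnects the two sides (equivalently, that $Q(\cP)$ is the gated amalgam of the $\cP_\cC$-part and the remainder along one vertex) is the delicate part. Lemma~\ref{lem:Bequal} is the linchpin for both directions, making the square's third-party values consistent and the facing parts $b_Q$ well-defined, so the technical weight lies in checking the vertex and gate constructions against Theorem~\ref{thm:qm-characterisation}.
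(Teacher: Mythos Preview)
Your approach is genuinely different from the paper's: the paper proceeds by \emph{induction on $|\cP|$}, removing one partition $P$, invoking the hypothesis on $\cP'=\cP\setminus\{P\}$, and then treating the two cases ``$P$ strongly compatible with all of $\cP'$'' (new isolated block via Theorem~\ref{thm:qm-characterisation}) and ``$P$ not strongly compatible with some $Q\in\cP'$'' (merge blocks via Theorem~\ref{thm:non-compatible}(ii)). This sidesteps entirely the global gate construction you set up, at the cost of relying on how $Q(\cP)$ is obtained from $Q(\cP')$.

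Your gluing direction is sound. The verification you flag---that the forced values $\phi_0(R)=B(R,P)$ (resp.\ $B(R,Q)$) extend to a vertex---does go through: if $R_1,R_2\in\cP\setminus\{P,Q\}$ are strongly compatible and both are forced (say via $P$), a short case analysis using $B(R_1,P)\cup B(P,R_1)=X$ etc.\ shows $B(R_1,P)=B(R_1,R_2)$ or $B(R_2,P)=B(R_2,R_1)$; mixed and unconstrained cases are handled similarly (Lemma~\ref{lem:Bequal} again) or by choosing a base vertex. So the $4$-cycle exists and the chaining works.

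The separation direction, however, has a genuine gap. You propose a \emph{single} vertex $z$ with $z(Q)=b_Q$ for all $Q\in\cP_2:=\cP\setminus\cP_\cC$ through which every path from a $\cP_\cC$-edge to a $\cP_2$-edge must pass. Such a vertex need not exist. Take $\cP_\cC=\{P\}$ with $P=\{A_1,A_2,A_3\}$ and two partitions $Q_1,Q_2$ (in different components of the non-strong-compatibility graph) with $B(P,Q_1)=A_1$ and $B(P,Q_2)=A_2$. The $P$-block is a triangle; the $Q_1$-block attaches at the triangle vertex with $P$-value $A_1$, the $Q_2$-block at the vertex with $P$-value $A_2$. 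No single vertex separates the triangle from both. What \emph{is} true---and suffices---is that no cycle of $Q(\cP)$ carries both a $\cP_\cC$-edge and a $\cP_2$-edge: any $\cP_\cC$-edge $\{u,v\}$ has $u(Q)=v(Q)=b_Q$ for every $Q\in\cP_2$ (since the $Q$-coordinate is constant along it and one endpoint is forced to $b_Q$ by Theorem~\ref{thm:qm-characterisation}), whereas a $Q$-edge leaves the set $\{w:w(Q)=b_Q\}$, and one then argues per $Q$ that the $\cP_\cC$-edge and the $Q$-edge cannot lie on a common cycle. Equivalently, run your gate argument component-by-component on the $\cP_2$ side rather than against all of $\cP_2$ at once. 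With that correction the direct proof goes through; as written, the ``single $z$'' step fails.
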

\begin{proof}
We prove the theorem by induction on $\card \cP$. In the base case $\card \cP=1$, it follows from Theorem~\ref{thm:non-compatible} that $Q(\{P\})$ is isomorphic to a complete graph with $\card P$ vertices; the non-strong-compatibility graph of $\{P\}$ is just an isolated vertex.

Now let $\card \cP >1$ and choose some $P\in \cP$ and set $\cP'=\cP\setminus\{P\}$. 
By the induction hypothesis, the blocks of $Q(\cP')$ are 
in bijection with the connected components of the 
non-strong-compatibility graph of $\cP'$. First suppose 
that $P$ is strongly compatible to all $P'\in \cP'$. 
Obviously, the non-strong-compatibility graph of $\cP$ is 
derived from the non-strong-compatibility graph of $\cP'$ 
by adding the isolated vertex $P$. By 
Theorem~\ref{thm:qm-characterisation}, the vertices 
of $Q(\cP)$ are either just vertices of the subgraph 
isomorphic to $Q(\cP')$ or those $\cP$-maps $v$ defined by
\[
v(Q)=\begin{cases}
B(Q,P),&\text{if }Q\in\cP',\\
A,&\text{otherwise,}
\end{cases}
\]
for some $A\in P$. There can be only one vertex 
which is of both types, and this is the cut vertex 
separating the two types of vertices and hence the 
new block where all edges correspond to $P$ from 
the other blocks. The existence of the bijection 
now follows from the induction hypothesis.

Now suppose $P$ is not strongly compatible to some $Q\in\cP'$. For the non-strong-compatibility graph of $\cP$, this means the new vertex $P$ will be part of a new connected component that is the union of $\{P\}$ with all connected components of the non-strong-compatibility graph of $\cP'$ that contains some $Q\in\cP'$ not strongly compatible to $P$. By the induction hypothesis, these connected components are in bijection with blocks of $Q(\cP')$ and it follows that all those blocks are combined to create a new block $B$ of $Q(\cP)$. What remains to be shown is that there does not exist a block $B'$ of $Q(\cP')$ that is joined to $B$ and that does not contain any edge corresponding to some $Q\in\cP'$ that is not strongly compatible to $P$.
Suppose that would be the case. This would imply the existence of an edge $E$ of $B'$ that is on the shortest path between elements of two other blocks $B_1,B_2$ of $Q(\cP')$ that are joined into $B$. Let $P(E)=Q$ and let $R$ be in the block $B_1$ with $P$ and $R$ not strongly compatible and, similarly, $S$ in the block $B_2$ with $P$ and $S$ not strongly compatible. As $P$ and $R$ are not strongly compatible and, by assumption, $Q$ is strongly compatible to both $P$ and $R$, it follows from Lemma~\ref{lem:Bequal} that $B(Q,P)=B(Q,R)$. Similarly, we get $B(Q,P)=B(Q,S)$ which implies $B(Q,R)=B(Q,S)$. This, however, implies that the edges corresponding to $R$ and $S$ lie in the same connected component of the graph obtained from $Q(\cP')$ by removing $E(Q)$, contradicting the fact that $E$ is on the shortest path between elements of $B_1$ and $B_2$.
\end{proof}

\begin{exmp}
Considering Example~\ref{ex:one}, we see 
that the non-strong-compatibility graph in 
Figure~\ref{fig:ic-graph} has eight connected 
components: One whose vertex set consists of
the partitions $P_1,P_2$, $P_3$ and $P_8$, one containing
the partitions $P_5$ and $P_6$, and six isolated vertices
corresponding to the 
remaining partitions. This is in accordance to 
the eight blocks of the quasi-median graph in 
Figure~\ref{fig:qm-graph}, these being the large 
block in the middle of the graph, corresponding 
to $P_1$, $P_2$, $P_3$ and $P_8$, the block on the left
isomorphic to the Cartesian product of an edge and a triangle,
corresponding to $P_5$ and $P_6$,
two triangular blocks 
corresponding to the partitions $P_7$ and $P_{12}$ 
each having three parts, and 
five edges corresponding to partitions $P_4$, $P_9$,
$P_{10}$ and $P_{11}$ each having two parts.
\end{exmp}

It follows from Theorem~\ref{thm:bijection} that the 
collection of sets
$\cP(B)$ over all blocks $B$ of $Q(\cP)$
defines a partition $\Part(\cP)$ of $\cP$, and
that the following result holds that will be useful later.

\begin{cor}\label{cor:partitions}
Let $\cP$ be a partition system of $X$ with $|\cP| >1$, 
$P\in \cP$, $\cP':=\cP\setminus\{P\}$ and 
$I(\cP',P):=\smallSetOf{Q\in\cP'}{Q \text{ not 
strongly compatible to }P}$. Then we have
\[
\Part(\cP)=\SetOf{\cR\in \Part(\cP')}{I(\cR,P)=\emptyset}
\cup \left\{\bigcup\SetOf{\cR\in \Part(\cP')}{I(\cR,P)\not=\emptyset}
\cup\{P\}\right\}\,.
\]
In particular, if $I(\cP',P)=\emptyset$, we have 
$\Part(\cP)=\Part(\cP')\cup\{\{P\}\}$.
\end{cor}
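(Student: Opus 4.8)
The plan is to deduce the statement directly from Theorem~\ref{thm:bijection}, which identifies $\Part(\cP)$ with the partition of $\cP$ into the vertex sets of the connected components of the non-strong-compatibility graph $G_\cP$ of $\cP$, and likewise identifies $\Part(\cP')$ with the connected components of the non-strong-compatibility graph $G_{\cP'}$ of $\cP'$. Once the problem is translated into this language, it reduces to an elementary observation about what happens to the connected components of a graph when a single vertex is adjoined.

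First I would record that $G_{\cP'}$ is exactly the subgraph of $G_\cP$ induced on $\cP'=\cP\setminus\{P\}$: by definition, whether two partitions of $X$ are strongly compatible depends only on those two partitions and not on the rest of the system, so deleting $P$ from $G_\cP$ leaves precisely $G_{\cP'}$. Consequently the only edges of $G_\cP$ absent from $G_{\cP'}$ are those incident to $P$, and by the definition of the non-strong-compatibility graph these join $P$ to exactly the vertices in $I(\cP',P)=\smallSetOf{Q\in\cP'}{Q\text{ not strongly compatible to }P}$. I would also note that, since $\Part(\cP')$ partitions $\cP'$, we have $I(\cP',P)=\bigcup_{\cR\in\Part(\cP')}I(\cR,P)$; in particular a part $\cR\in\Part(\cP')$ contains a neighbour of $P$ in $G_\cP$ if and only if $I(\cR,P)\neq\emptyset$.

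With this in hand I would verify the two halves of the claimed union. Any component $\cR$ of $G_{\cP'}$ with $I(\cR,P)=\emptyset$ contains no neighbour of $P$; since adjoining $P$ adds no edges internal to $\cP'$, no new path can link $\cR$ to anything, so $\cR$ survives intact as a connected component of $G_\cP$ and contributes the first set in the union. Each component $\cR$ with $I(\cR,P)\neq\emptyset$, on the other hand, contains a vertex adjacent to $P$, so in $G_\cP$ all such components together with $P$ lie in a single connected component; and as $P$ has no further neighbours, the vertex set of this component is exactly $\bigcup\SetOf{\cR\in\Part(\cP')}{I(\cR,P)\neq\emptyset}\cup\{P\}$, giving the second set. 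Since the listed pieces are pairwise disjoint and exhaust $\cP$, they are precisely the parts of $\Part(\cP)$.

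There is no serious obstacle here; the only point requiring care is the bookkeeping of which components merge, namely checking that the merged component acquires $P$ together with every part meeting $I(\cP',P)$ but nothing else (the latter because any edge joining it to a part with $I(\cR,P)=\emptyset$ would already exist in $G_{\cP'}$, contradicting that part being a separate component). The ``in particular'' statement is then immediate: if $I(\cP',P)=\emptyset$, then no part $\cR$ satisfies $I(\cR,P)\neq\emptyset$, the second set collapses to $\{P\}$, and hence $\Part(\cP)=\Part(\cP')\cup\{\{P\}\}$.
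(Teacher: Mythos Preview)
Your proposal is correct and follows exactly the route the paper intends: the corollary is stated as an immediate consequence of Theorem~\ref{thm:bijection}, and you have simply spelled out the elementary graph-theoretic observation about how connected components of the non-strong-compatibility graph change when the vertex $P$ is adjoined. The paper gives no separate proof beyond remarking that the result follows from Theorem~\ref{thm:bijection}, so your argument is a faithful elaboration of that claim.
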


Also, by Theorem~\ref{thm:bijection}
and Proposition~\ref{prop:sc-cardinality},
the following bounds on the number of 
cut vertices and blocks in a quasi-median graph must hold;
this will be useful for establishing run time
bounds for our main algorithm.

\begin{cor}\label{prop:num-cut vertices}
Let $X$ be a set of cardinality $n\geq 2$ and $\cP$ be a 
set of partitions of $X$. Then $Q(\cP)$ has at most 
$3n-5$ blocks and at most $3n-6$ cut vertices.
\end{cor}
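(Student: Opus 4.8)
The plan is to deduce the corollary from Theorem~\ref{thm:bijection} and Proposition~\ref{prop:sc-cardinality}, with an elementary counting step at the end. First I would bound the number of blocks. By Theorem~\ref{thm:bijection} the blocks of $Q(\cP)$ are in bijection with the connected components of the non-strong-compatibility graph of $\cP$, so it is enough to bound the number of these components. To this end I would choose one representative partition from each component and collect them into a set $\cR\subseteq\cP$. Two representatives from distinct components are non-adjacent in the non-strong-compatibility graph, which by the definition of that graph means precisely that they are strongly compatible; hence $\cR$ is a strongly compatible set of partitions. Proposition~\ref{prop:sc-cardinality} then yields $\card\cR\le 3n-5$, and since $\card\cR$ equals the number of components, and hence the number of blocks, we obtain the bound of $3n-5$ blocks.

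To bound the cut vertices I would argue on the block-cut tree $T$ of $Q(\cP)$, which is available since quasi-median graphs are connected. Let $b$ denote the number of blocks and $c$ the number of cut vertices of $Q(\cP)$. Then $T$ is a tree on $b+c$ nodes, and so has $b+c-1$ edges; it is moreover bipartite, with every edge joining a block node to a cut-vertex node. Because a cut vertex lies in at least two blocks, each cut-vertex node of $T$ has degree at least $2$. Since every edge of $T$ has a unique cut-vertex endpoint, summing degrees over the cut-vertex nodes counts each edge exactly once, giving $b+c-1=\sum_v\deg_T(v)\ge 2c$ and therefore $c\le b-1\le 3n-6$.

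I do not anticipate a genuine difficulty, as the result is largely a repackaging of the two quoted statements. The point that needs care is the representative argument: one must check that distinct representatives really are pairwise strongly compatible---equivalently, pairwise non-adjacent in the non-strong-compatibility graph---so that Proposition~\ref{prop:sc-cardinality} applies to $\cR$, whose cardinality then equals the number of components. The block-cut tree count is routine once the connectivity of $Q(\cP)$ is invoked.
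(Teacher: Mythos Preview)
Your proposal is correct and follows precisely the route the paper indicates: the paper simply asserts that the corollary is a consequence of Theorem~\ref{thm:bijection} and Proposition~\ref{prop:sc-cardinality}, and your argument fills in exactly those details. The representative-set trick for the block bound and the block--cut tree count for the cut-vertex bound are the natural ways to make the deduction explicit, and nothing beyond what the paper cites is needed.
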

%\begin{proof}
%By Theorem~\ref{thm:bijection}, the number of blocks 
%of $Q(\cP)$ equals the number of connected components 
%of the non-strong-compatibility graph. So 
%Proposition~\ref{prop:sc-cardinality} implies 
%that there are at most $3n-5$ blocks in $Q(\cP)$. 
%This implies that there can be at most $3n-6$ cut 
%vertices connecting these blocks.
%\end{proof}

We conclude this section by presenting a characterisation
for the cut vertices in a quasi-median graph
that is of independent interest, and will not 
be used later. First we prove a useful observation.

\begin{lem}\label{lem:help}
Let $\cP$ be a partition system of $X$ and
$v$ a cut vertex of $Q(\cP)$.
Suppose that $P_1,P_2 \in \cP$ are distinct 
and that $P_i$ corresponds to an edge
in the subgraph induced by $Q(\cP)$ on the
set $V(C_i)\cup\{v\}$, $i=1,2$, where $C_1, C_2$ 
are two distinct connected components of 
the graph $Q(\cP)$ with $v$ removed.
Then $P_1, P_2$ are strongly compatible, and
$v(P_1)=B(P_1,P_2)$, $v(P_2)=B(P_2,P_1)$ both hold.
\end{lem}
\begin{proof}
Since $P_1, P_2$ must be contained in distinct blocks
of $Q(\cP)$, it immediately follows by Theorem~\ref{thm:bijection}
that $P_1$ and $P_2$ are strongly compatible.

Now, by Theorem~\ref{thm:qm-characterisation} 
we can assume without loss of
generality that $v(P_1)=B(P_1,P_2)$. Let
$\{w,w'\}$ be an edge in $Q(\cP)$ that
corresponds to $P_1$. Without loss of generality, we 
can assume that there is path in $Q(\cP)$ from $w$
to $v$ such that no edge in this path 
corresponds to $P_1$ or $P_2$. In particular, we
have $w(P_1)=v(P_1)=B(P_1,P_2)$. Moreover,
$w'(P_1)\neq B(P_1,P_2)$ and so by 
Theorem~\ref{thm:qm-characterisation} $w'(P_2)=B(P_2,P_1)$.
But, $w'(P_2)=v(P_2)$ as, by 
Theorem~\ref{thm:bijection}, the block containing
all edges corresponding to $P_2$ must be contained
in the subgraph induced by $Q(\cP)$ on the
set $V(C_2)\cup\{v\}$. This completes the proof of the lemma.
\end{proof}

We now present the aforementioned characterisation of cut vertices.
Note that it generalises a characterisation
of cut vertices in median graphs given in Dress et al.~\cite{DHKM11}.

\begin{thm}\label{characterize}
Let $\cP$ be a partition system of $X$
and $v$ be a vertex of $Q(\cP)$. Then
$v$ is a cut vertex of $Q(\cP)$ if and only if the 
graph $G_{v}$ with vertex set $\cP$ and edge set 
$\smallSetOf{\{P,Q\}}{P, Q \in \cP, P\neq Q \mbox{ \em and }  
v(P) \cup v(Q) \neq X }$
is disconnected.
\end{thm}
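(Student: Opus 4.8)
The plan is to analyse $G_v$ by comparing it with the non-strong-compatibility graph of $\cP$ and applying the vertex characterisation of Theorem~\ref{thm:qm-characterisation}. First I would observe that if $P,Q$ are \emph{not} strongly compatible, then $v(P)\cup v(Q)\neq X$ for every choice of parts, so $\{P,Q\}$ is automatically an edge of $G_v$; hence the non-strong-compatibility graph is a spanning subgraph of $G_v$. For a distinct, strongly compatible pair $P,Q$, the unique pair of parts with union $X$ is $(B(P,Q),B(Q,P))$, so $v(P)\cup v(Q)=X$ exactly when $v(P)=B(P,Q)$ and $v(Q)=B(Q,P)$ both hold. Since $v$ is a vertex, Theorem~\ref{thm:qm-characterisation} guarantees at least one of these equalities, so $\{P,Q\}$ is a \emph{non}-edge of $G_v$ precisely when both hold. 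This reduces the problem to understanding, for strongly compatible pairs, when both equalities hold.

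For the forward direction, suppose $v$ is a cut vertex, so that $Q(\cP)-v$ has components $C_1,\dots,C_k$ with $k\geq2$. By Theorem~\ref{thm:bijection} all edges corresponding to a fixed $P\in\cP$ lie in a single block, which in turn lies inside some $V(C_i)\cup\{v\}$; this assigns to each partition a component index $i(P)$, and each class is nonempty because every $C_i$ is joined to $v$ by an edge. If $i(P_1)\neq i(P_2)$, then $P_1,P_2$ correspond to edges in two distinct component-subgraphs, so Lemma~\ref{lem:help} applies and yields $v(P_1)=B(P_1,P_2)$, $v(P_2)=B(P_2,P_1)$, whence $v(P_1)\cup v(P_2)=X$ and $\{P_1,P_2\}$ is a non-edge of $G_v$. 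Thus $G_v$ has no edges between different component-classes and so is disconnected.

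For the converse I would suppose $G_v$ is disconnected and write $\cP=\cP_1\dunion\cP_2$ with no $G_v$-edge between the parts. By the first paragraph, every $P\in\cP_1$ and $Q\in\cP_2$ are strongly compatible with $v(P)=B(P,Q)$ and $v(Q)=B(Q,P)$. Let $S_1$ (resp.\ $S_2$) be the set of vertices of $Q(\cP)$ agreeing with $v$ on all of $\cP_2$ (resp.\ $\cP_1$). Applying Theorem~\ref{thm:qm-characterisation} to an arbitrary vertex $u$ and a cross pair $(P,Q)$ shows $u$ agrees with $v$ on $P$ or on $Q$; a short argument then gives $V(Q(\cP))=S_1\cup S_2$, while plainly $S_1\cap S_2=\{v\}$. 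A direct case check on the single partition in which the endpoints of an edge differ shows there is no edge between $S_1\setminus\{v\}$ and $S_2\setminus\{v\}$, so deleting $v$ separates these two sets.

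It then remains to prove both sets are nonempty, which I expect to be the main obstacle. Here I would use a lifting construction: the restriction $v|_{\cP_1}$ is a vertex of $Q(\cP_1)$, and for any vertex $u_1$ of $Q(\cP_1)$ the $\cP$-map equal to $u_1$ on $\cP_1$ and to $v$ on $\cP_2$ is a vertex of $Q(\cP)$, because on a cross pair $(P,Q)$ the characterisation is automatically satisfied via $v(Q)=B(Q,P)$. Choosing $u_1\neq v|_{\cP_1}$ (possible as soon as some partition in $\cP_1$ is nontrivial, so that $Q(\cP_1)$ has more than one vertex) produces a vertex of $S_1\setminus\{v\}$, and symmetrically for $S_2$; hence $v$ is a cut vertex. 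The delicate points will be verifying that this lift really satisfies Theorem~\ref{thm:qm-characterisation} on all three pair types and confirming that each $\cP_i$ contains a nontrivial partition, so that the lift genuinely differs from $v$.
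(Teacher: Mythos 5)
Your proposal is correct, and while your forward direction coincides with the paper's (both deduce disconnectedness of $G_v$ from Theorem~\ref{thm:bijection} together with Lemma~\ref{lem:help}), your converse takes a genuinely different route. The paper argues by contradiction: assuming $v$ is not a cut vertex, it uses Theorem~\ref{thm:bijection} to find a block $B'$ whose partition set $\cP(B')$ lies in a $G_v$-component different from that of the block containing $v$, takes the cut vertex $w$ of that block on a shortest path towards $B'$, and applies Lemma~\ref{lem:help} at $w$ to manufacture a forbidden $G_v$-edge between the two components. You instead give a direct construction: from a separation $\cP=\cP_1\dunion\cP_2$ of $G_v$ you extract, for every cross pair $P\in\cP_1$, $Q\in\cP_2$, that $v(P)=B(P,Q)$ and $v(Q)=B(Q,P)$; then the sets $S_1,S_2$ of vertices agreeing with $v$ on $\cP_2$ resp.\ $\cP_1$ satisfy $S_1\cup S_2=V(Q(\cP))$ (a vertex outside both would differ from $v$ on some cross pair $(P,Q)$ at both coordinates, violating Theorem~\ref{thm:qm-characterisation}), $S_1\cap S_2=\{v\}$, and admit no crossing edge (the single partition on which the endpoints of an edge differ forces one endpoint into $S_1\cap S_2$). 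Your restriction-and-lift argument for nonemptiness is sound: the lift of a vertex $u_1$ of $Q(\cP_1)$ passes the characterisation on pairs inside $\cP_1$ because $u_1$ is a vertex, on pairs inside $\cP_2$ because it agrees with $v$ there, and on cross pairs via $v(Q)=B(Q,P)$; and the caveat you flag---that each $\cP_i$ contain a partition with at least two parts---is automatic under the paper's standing convention that partitions are nontrivial (which is anyway forced, since otherwise $B(P,Q)$ is not well defined). Your route buys an explicit description of the two sides of the separation at $v$, showing that $Q(\cP)-v$ splits exactly along the $G_v$-components, which the paper's contradiction argument does not provide; the paper's proof is shorter because it recycles the block-decomposition machinery and Lemma~\ref{lem:help} in both directions.
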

\begin{proof}
Suppose that $v$ is a cut vertex of $Q(\cP)$. Then it follows
immediately by Theorem~\ref{thm:bijection} and
Lemma~\ref{lem:help} that $G_{v}$ is disconnected.
 
Conversely, suppose that $G_{v}$ is disconnected,
and, for contradiction, that $v$ is not a cut vertex of $Q(\cP)$.
Note that the non-strong compatibility graph
of $\cP$ is a subgraph of $G_{v}$.
Hence the non-strong compatibility graph
of $\cP$ is disconnected. Therefore, by  Theorem~\ref{thm:bijection}
there are at least two blocks in $Q(\cP)$. 

Now, suppose $B$ is the block of $Q(\cP)$ containing $v$.
By Theorem~\ref{thm:bijection} there must exist some 
block $B' \neq B$ of $Q(\cP)$ such that $\cP(B')$ is
contained in the vertex set of some connected component 
of $G_{v}$ that is not equal to the connected component
of $G_{v}$ whose vertex contains $\cP(B)$. Let
$w$ be the cut vertex of $Q(\cP)$ contained in $B$
which lies on a shortest path from $v$ to some vertex in $B'$.
Let $P \in \cP$ correspond to the edge 
on this path incident with $w$ (which must exist as
$v$ is not a cut vertex), and let $P' \in \cP(B')$.
Then, by Lemma~\ref{lem:help}, $w(P)=B(P,P')$ and $w(P')=B(P',P)$.
Moreover, by Theorem~\ref{thm:bijection}, $w(P')=v(P')$
and $w(P) \neq v(P)$. Hence $v(P) \cup v(P') \neq X$,
which is a contradiction as $P$ and $P'$ are in distinct components of $G_{v}$.  
\end{proof}

%\section{Block splits}
%
%A  $\{A,B\}$ of $X$ is called a \emph{block split} 
%of the quasi-median graph $Q(\cP)$ if there 
%exists a cut vertex $c$ of $Q(\cP)$ such that  
%$c$ divides $X$ in the right way.
%
%For $X'\subset X$, we set the \emph{restriction} 
%$\cP|_{X'}$ of $\cP$ to $X'$ as
%\[
%  \cP|_{X'}:=\SetOf{P'=\SetOf{A\cup X'}{A\in P}}
%{P \text{ partition of } X \text{ and }P' \text{ partition of }X'}\,.
%\]
%
%Obviously, $\cP|_{X'}$ is strongly compatible if $\cP$ 
%is and the incompatibility graph of $\cP|_{X'}$ is 
%isomorphic to the subgraph of $\SCG(\cP)$ induced 
%by all $P\in \cP$ such that $\SetOf{A\cup X'}{A\in P}$ is a partition.

\section{The block decomposition of a 
quasi-median graph}\label{sec:block_decomp}

As stated in the introduction, we want to determine the 
blocks of the quasi-median graph $Q(\cP)$ of a 
partition system $\cP$ without having to compute $Q(\cP)$ itself.
To do this, rather than computing the blocks of $Q(\cP)$ directly,
we shall compute some sets associated with 
each block which we now define. 

Given a block $B$ of $Q(\cP)$, we let 
$X(B)=V(B)\cap \pi(X)$ denote the set of 
vertices in $B$ labelled by elements in $X$, $\cP(B)$ the set
of partitions in $\cP$ corresponding to edges of $B$ and 
$S(B)$ the set of  cut
vertices of $Q(\cP)$ that are in $B$ but not in $X(B)$.
Note that $X(B)$ or $S(B)$ can be empty,
but that $X(B) \cup S(B)$ is never empty.
We will also consider 
the set $\cP_r(B)$ of partitions of
the set $X(B) \cup S(B)$ that is induced 
by, for each $P \in \cP(B)$, removing 
all those edges in $B$ that correspond to $P$.

\begin{exmp}\label{eg}
For  the large block $B$ in the middle of 
the quasi-median graph in Example~\ref{ex:one}, 
we have $X(B)=\{s_7,s_8\}$, 
$S(B)=\{e_1,e_4,e_5\}$, $\cP(B)=\{P_1,P_2,P_3,P_8\}$ 
and $\cP_r(B)=\{P'_1,P'_2,P'_3,P'_8\}$, where
\begin{align*}
P'_1&=\{\{s_7,s_8,e_5\},\{e_1,e_4\}\},&P'_2=\{\{e_1,e_5,s_8\},\{s_7,e_4\}\}\,,\\
P'_3&=\{\{s_7,s_8,e_1\},\{e_4,e_5\}\},&P'_8=\{\{s_7,e_1,e_4\},\{s_8,e_5\}\}\,.
\end{align*}
\end{exmp}

Now, we define the \emph{block decomposition} $\cB(\cP)$ of 
the quasi-median graph of a partition system $\cP$ on 
the set $X$ to be the set 
\[
\SetOf{ (X(B),S(B),\cP_r(B))}{B \mbox{ is a block of } Q(\cP) }.
\]
Our main aim is to compute this decomposition without
having to compute $Q(\cP)$.
Note that in view of the following lemma we can always
reconstruct $Q(\cP)$ from $\blist(\cP)$.

\begin{lem} 
Given a partition system $\cP$ and a block 
$B$ of $Q(\cP)$, the quasi-median graph 
$Q(\cP_r(B))$ is isomorphic to $B$.
\end{lem}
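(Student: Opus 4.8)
The plan is to realise the asserted isomorphism as a composite $B\cong Q(\cP(B))\cong Q(\cP_r(B))$, where $Q(\cP(B))$ denotes the quasi-median graph of the sub-system $\cP(B)\subseteq\cP$ on the \emph{original} ground set $X$. The starting observation is that, because $B$ is a block, every edge of $B$ corresponds to a partition in $\cP(B)$; hence any two vertices of $B$, being joined by a path inside $B$ whose edges only change coordinates in $\cP(B)$, agree on every partition $Q\in\cP\setminus\cP(B)$. I fix a base vertex $v_0\in V(B)$ and write $c$ for this common value of the vertices of $B$ on $\cP\setminus\cP(B)$. Restriction of $\cP$-maps then defines $\rho\colon V(B)\to V(Q(\cP(B)))$, $w\mapsto w|_{\cP(B)}$, which is injective (a vertex of $B$ is determined by its $\cP(B)$-coordinates together with the fixed value $c$) and, since blocks are induced subgraphs and edges of $Q(\cP(B))$ change a single $\cP(B)$-coordinate, both preserves and reflects adjacency.

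To see that $\rho$ is onto I invert it. By Theorem~\ref{thm:qm-characterisation} the vertices of $Q(\cP(B))$ are exactly the consistent $\cP(B)$-maps; given such a $u$, let $\tilde u$ be the $\cP$-map agreeing with $u$ on $\cP(B)$ and with $c$ on $\cP\setminus\cP(B)$, and check that $\tilde u$ is again consistent. For a strongly compatible pair lying entirely in $\cP(B)$ this is inherited from $u$, and for a pair lying entirely in $\cP\setminus\cP(B)$ it is inherited from $v_0$. The substantial case is a strongly compatible pair $P_1\in\cP(B)$, $P_2\in\cP\setminus\cP(B)$: since $\cP(B)$ is a full connected component of the non-strong-compatibility graph (Theorem~\ref{thm:bijection}), $P_2$ is strongly compatible with \emph{every} $P\in\cP(B)$, and walking along the edges of that component while applying Lemma~\ref{lem:Bequal} with $R=P_2$ shows that $B(P_2,P)$ equals a single value $\beta$ for all $P\in\cP(B)$. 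Choosing any edge of $B$, corresponding to some $P\in\cP(B)$, its two endpoints cannot both have $P$-value $B(P,P_2)$, so by Theorem~\ref{thm:qm-characterisation} one endpoint has $P_2$-value $B(P_2,P)=\beta$; as all vertices of $B$ share the value $c(P_2)$, this forces $c(P_2)=\beta=B(P_2,P_1)$, which is precisely the condition needed for $\tilde u$. Thus $\tilde u\in V(Q(\cP))$, restriction and $\tilde{\cdot}$ are mutually inverse bijections between the set $W$ of vertices agreeing with $c$ off $\cP(B)$ and $V(Q(\cP(B)))$.

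It then remains to identify $W$ with $V(B)$. The inverse $\tilde{\cdot}$ carries each edge of $Q(\cP(B))$ to an edge of $Q(\cP)$ joining two members of $W$, so $Q(\cP)$ induces a connected subgraph on $W$ all of whose edges correspond to partitions of $\cP(B)$. By Theorem~\ref{thm:bijection} the sets $\cP(B')$ partition $\cP$ over the blocks $B'$, so every edge corresponding to a partition of $\cP(B)$ already lies in $B$; following a path in $W$ from $v_0$ to any $w\in W$ therefore never leaves $B$, giving $W\subseteq V(B)$ and hence $W=V(B)$. Consequently $\rho$ is an isomorphism $B\xrightarrow{\ \sim\ }Q(\cP(B))$.

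Finally I pass from $\cP(B)$ on $X$ to $\cP_r(B)$ on $Y=X(B)\cup S(B)$. The elements of $Y$ are themselves vertices of $B$, and the partition $P'\in\cP_r(B)$ induced by $P\in\cP(B)$ groups two elements of $Y$ together exactly when they share the same $P$-value; so $\cP_r(B)$ is simply the restriction of $\cP(B)$ to the sub-ground-set $Y$. As this restriction leaves the collection of realised parts, their strong-compatibility relations, and the distinguished sets $B(\cdot,\cdot)$ intact, Theorem~\ref{thm:qm-characterisation} yields $Q(\cP(B))\cong Q(\cP_r(B))$, and composing the two isomorphisms proves the lemma. I expect the main obstacle to be the mixed strongly-compatible pair in the second paragraph, where Lemma~\ref{lem:Bequal} together with $\cP(B)$ being an entire non-strong-compatibility component does the essential work; a secondary point needing care is the faithfulness of the ground-set reduction in the last step — that distinct parts stay distinct after restricting to $Y$, i.e.\ that each $P$-value fibre of $B$ is connected and meets $Y$.
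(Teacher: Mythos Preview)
Your argument is correct and considerably more detailed than the paper's own proof, which simply writes down the map $v\mapsto v'$ (sending a vertex of $B$ to the $\cP_r(B)$-map assigning to each $P'$ the part corresponding to $v(P)$) and asserts without further justification that this is an isomorphism. You take a genuinely different route: you factor the isomorphism as $B\cong Q(\cP(B))\cong Q(\cP_r(B))$, with $\cP(B)$ viewed on the original ground set $X$, and you verify each piece via Theorem~\ref{thm:qm-characterisation}, Theorem~\ref{thm:bijection}, and Lemma~\ref{lem:Bequal}. The mixed-pair analysis in your second paragraph is exactly the substance that the paper's sketch suppresses, and your use of Lemma~\ref{lem:Bequal} together with the fact that $\cP(B)$ is a full connected component of the non-strong-compatibility graph is the right tool there. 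The trade-off is that the paper's direct map avoids the intermediate object $Q(\cP(B))$ and the separate ground-set reduction, at the cost of leaving all the verification to the reader; your approach makes the dependence on Theorems~\ref{thm:qm-characterisation} and~\ref{thm:bijection} explicit.

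The one point you flag but do not fully close---that every part of each $P\in\cP(B)$ is realised by some element of $Y=X(B)\cup S(B)$---is indeed needed for the last step and is easy to supply: for $A\in P$ pick $x\in X$ with $P(x)=A$; if $v_x\in V(B)$ then $v_x\in X(B)\subseteq Y$, and otherwise any shortest path from $v_x$ to $B$ enters $B$ through a cut vertex $w\in Y$, and since by Theorem~\ref{thm:bijection} no edge on this path outside $B$ corresponds to a partition in $\cP(B)$, we have $w(P)=v_x(P)=A$. The same argument shows that non-strong-compatibility is preserved under the passage from $\cP(B)$ to $\cP_r(B)$ (choose $x$ outside $A_1\cup A_2$ and replace it by the corresponding cut vertex), which justifies your claim that the strong-compatibility relations and the sets $B(\cdot,\cdot)$ are left intact.
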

\begin{proof}
By definition, a $\cP$-map $v$ is a vertex of the 
block $B$ if and only if $v$ is contained in some 
edge of $Q(\cP)$ corresponding to an element of $\cP(B)$. 
Consider now the $\cP_r(B)$-map $v'$ that maps 
a partition $P'\in\cP_r(B)$ to that $A'\in P'$ that corresponds to the part $A=v(P)\in P$. This is a vertex of 
$Q(\cP_r(B))$ and it can be easily seen that the map 
$v\mapsto v'$ induces the desired 
isomorphism between $B$ and $Q(\cP_r(B))$. 
\end{proof}

\begin{rem}
In \cite[Theorem 3]{SD11}, Schwarz and D\"ur define
what they call the \emph{Block Decomposition of a Quasi-Median Network}. 
However, they do not use the notion of \emph{block} in the 
usual graph theoretical way. Instead, they work with a 
notion that is suitable for their aim of visualising 
quasi-median graphs. In particular, their blocks depend on 
an arbitrary vertex of the quasi-median graph which can 
be chosen in a suitable way to obtain 
improved visualisations.
\end{rem}

In what follows, we shall not directly compute 
the block decomposition of $Q(\cP)$, but instead
some closely related data from  
which the  block decomposition can be easily computed.

To this end, let $S(\cP)$ denote the union of all $S(B)$ 
with $B$ a block of $Q(\cP)$; we call any
element in $S(\cP)$ an \emph{extra vertex}. 
For $v\in S(\cP)$ we denote the set of all blocks $B$ 
in $Q(\cP)$ with $v \in S(B)$ by $B(v)$. 
An element $x\in X$ is \emph{in the direction of}
$B$ with respect to $v\in S(B)$ if every path from 
$x$ to $v$ has an edge in $B$.
Note that since all vertices of $Q(\cP)$
are elements of the quasi-median hull
of $\pi(X)$, there always exists such an 
element $x(v,B)$ although this element is not necessarily unique. 

\begin{lem}\label{lem:cpcpr}
Suppose that $\cP$ is a partition system on $X$
and $B$ is a block of $Q(\cP)$. If we are given the sets
$X(B)$, $S(B)$, $\cP(B)$ and, for each $v\in S(B)$ and some $C\in B(v) \setminus \{B\}$some element $x(v,C)$ in the direction of $C$ with respect to $v$,
then we can obtain the set $\cP_r(B)$ from the set $\cP(B)$
in time $\bigO(n m)$, where $n=\card X$, $m=\card \cP$.
\end{lem}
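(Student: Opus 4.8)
The plan is to treat each partition $P\in\cP(B)$ separately: for each one I will build the induced partition $P'$ of $X(B)\cup S(B)$ directly from $P$ and the supplied data, and then bound the cost of a single such step by $\bigO(n)$, so that the total cost is $\bigO(\card{\cP(B)}\cdot n)=\bigO(nm)$.

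First I would make explicit what $P'$ has to be. By the definition of $\cP_r(B)$, two vertices of $B$ lie in the same part of $P'$ exactly when they are joined by a path in $B$ using no edge corresponding to $P$; since an edge corresponds to $P$ precisely when its endpoints disagree on $P$, this means that $u,u'\in X(B)\cup S(B)$ lie in the same part of $P'$ if and only if the associated $\cP$-maps agree on $P$. Hence $P'$ is nothing but the partition of $X(B)\cup S(B)$ into the fibres of the map $\mu_P$ sending each $u$ to the value its $\cP$-map takes on $P$. For $u=x\in X(B)$ the map is $v_x$, so $\mu_P(x)=v_x(P)=P(x)$ is read off from $P$ directly, and the whole problem reduces to evaluating $\mu_P(v)=v(P)$ for the finitely many extra vertices $v\in S(B)$.

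The key step, and the part I expect to be the main obstacle, is the identity $v(P)=P(x(v,B))$ for every $v\in S(B)$ and every $P\in\cP(B)$; equivalently, that the $\cP$-maps $v$ and $\pi(x(v,B))=v_{x(v,B)}$ agree on all of $\cP(B)$. To prove it I would combine two facts. On the one hand, by Theorem~\ref{thm:bijection} every edge of $Q(\cP)$ corresponding to a partition of $\cP(B)$ lies inside the single block $B$. On the other hand, the defining property of $x(v,B)$ places $\pi(x(v,B))$ in the branch at $v$ pointing away from $B$, so that $v$ separates $\pi(x(v,B))$ from every other vertex of $B$; consequently a geodesic from $\pi(x(v,B))$ to $v$ reaches $v$ without ever crossing an edge of $B$, and in particular without crossing any edge corresponding to a partition in $\cP(B)$. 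Along such a geodesic the value on each $P\in\cP(B)$ never changes, which yields $v(P)=\pi(x(v,B))(P)=P(x(v,B))$. The care needed here is exactly in marrying the block-wise location of the $P$-edges (Theorem~\ref{thm:bijection}) with the directional property of $x(v,B)$, so as to be certain that no $\cP(B)$-edge can separate $v$ from $\pi(x(v,B))$.

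Granting this identity, the algorithm and its analysis are routine. Fixing $P\in\cP(B)$, I would first pass once over $P$ to attach to every $y\in X$ a canonical label of the part $P(y)$, which costs $\bigO(n)$; then, running once through $X(B)\cup S(B)$, I label each $x\in X(B)$ by $P(x)$ and each $v\in S(B)$ by $P(x(v,B))$ and bucket the elements by these labels to read off $P'$. Since $\card{S(B)}$ is at most the number of cut vertices of $Q(\cP)$, which is $\bigO(n)$ by Corollary~\ref{prop:num-cut vertices}, we have $\card{X(B)\cup S(B)}=\bigO(n)$, so a single partition is processed in $\bigO(n)$ time. Repeating this over the at most $m$ partitions in $\cP(B)$ produces $\cP_r(B)$ in total time $\bigO(nm)$, which is the bound claimed.
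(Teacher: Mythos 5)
Your overall skeleton --- reducing $P'$ to the fibres of the map $u\mapsto u(P)$ on $X(B)\cup S(B)$, evaluating $v(P)$ for each extra vertex via a reference element of $X$, and bounding the cost using $\card{S(B)}=\bigO(n)$ (Corollary~\ref{prop:num-cut vertices}) plus one $\bigO(n)$ pass per partition --- is exactly the paper's, but your key identity $v(P)=P(x(v,B))$ is false, and the error lies in your reading of the definition. An element $x$ is ``in the direction of $B$ with respect to $v$'' if \emph{every} path from $x$ to $v$ has an edge in $B$; this forces $\pi(x)$ to lie in the component of $Q(\cP)-v$ containing $B\setminus\{v\}$, since from any other component one can reach $v$ along edges of a different block at $v$, violating the ``every path'' clause. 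So $x(v,B)$ sits on the \emph{same} side of $v$ as $B$ --- not, as you assert, in a branch that $v$ separates from $B$ --- and a path from $\pi(x(v,B))$ to $v$ must run through $B$ and can cross edges corresponding to partitions in $\cP(B)$, so your geodesic argument collapses. Concretely, take $X=\{a,b,c\}$ and $\cP=\{P_a,P_b,P_c\}$ with $P_a=\{\{a\},\{b,c\}\}$ and so on: $Q(\cP)$ is a star whose centre $z$ is a cut vertex and whose leaves are $\pi(a),\pi(b),\pi(c)$. For the block $B$ given by the edge $\{z,\pi(a)\}$, the only admissible choice is $x(z,B)=a$ (every path from $\pi(a)$ starts with the unique edge at $\pi(a)$, which is in $B$), and your rule would place $z$ in $P_a(a)=\{a\}$, merging $z$ and $a$ in $P'$, whereas $z(P_a)=\{b,c\}$ and the correct induced partition $\{\{a\},\{z\}\}$ keeps them apart. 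The same failure occurs in the paper's running example: $s_7$ is a legitimate choice of $x(e_1,B)$ for the large middle block $B$, yet $e_1(P_1)\neq P_1(s_7)$.

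The paper's proof instead picks, for each $v\in S(B)$, a block $C\in B(v)\setminus\{B\}$ (nonempty, as $v$ is a cut vertex) and uses the element $x(v,C)$ pointing towards $C$: that element lies in a component of $Q(\cP)-v$ disjoint from $B\setminus\{v\}$, so some path from $\pi(x(v,C))$ to $v$ avoids all edges of $B$; since by Theorem~\ref{thm:bijection} every edge corresponding to a partition in $\cP(B)$ lies in $B$, the value on each $P\in\cP(B)$ is constant along that path, whence $v(P)=P(x(v,C))$ --- this is your argument, but with the correctly oriented reference element. In fairness, the paper itself is sloppy at this point: the lemma's hypothesis and the printed proof both write ``$x(v,B)$'' where ``$x(v,C)$'' is meant (the proof explicitly chooses $C\in B(v)\setminus\{B\}$ and then never mentions it again, the worked example substitutes $x(e_1,B_7)=s_3$ for $e_1$ when computing the middle block's partitions, and the closing paragraph of Section~\ref{sec:block_decomp} supplies $x(v,B)$ for every $B\in B(v)$, so the intended input is the whole family $x(v,\cdot)$). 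But this orientation issue is precisely the one step of the lemma that needs care, and as written your central identity is the step that fails; with $x(v,C)$, $C\in B(v)\setminus\{B\}$, in place of $x(v,B)$, the rest of your argument and the $\bigO(nm)$ count go through as in the paper.
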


\begin{proof}
For each partition $P\in\cP(B)$ we 
construct a partition $P'$ of $X(B)\cup S(B)$ as 
follows. Elements of $X(B)$ are in that part of $P'$ 
that they are in $P$. For each $v \in S(B)$ we choose 
some $C \in B(v) \setminus \{B\}$ and put $v$ in that 
part of $P'$ that $x(v,C)$ is in $P$. Repeating this 
for all partitions $P\in\cP(B)$ gives us the set $\cP_r(B)$.
This procedure can be carried out in time $O(mn)$, giving
the desired run time bound.
\end{proof}

\begin{exmp}
To compute $\cP_r(B)$ from $\cP(B)$ and the 
information $x(v,B)$ for all $v\in S(B)$
for the block $B$ in Example~\ref{eg}, assume 
that $x(e_1,B_7)=s_3,x(e_4,B_4)=s_5$ and 
$x(e_5,B_{10})=s_1$, where, for this moment, we 
denote by $B_i$ the block containing 
the (sole) partition $P_i$.

Now, we start out with partition $P_1$ and 
have to check in which part of the partition the 
extra points $e_1,e_4$ and $e_5$ are contained. Since 
$x(e_1,B_7)=s_3$, we substitute $s_3$ for $e_1$ 
in $P_1$ and, similarly, we substitute $s_5$ 
for $e_4$ and $s_1$ for $e_5$. Deleting all 
$x\in X\setminus X(B)$ in the remaining 
partition yields the partition $P'_1$. After 
performing the same process for $P_2,P_3$ and $P_8$, 
we obtain the set $\cP_r(B)$.
\end{exmp}

So, to compute the block decomposition of the 
quasi-median graph of a partition system $\cP$
it suffices to compute, for each block $B$ of $Q(\cP)$,
the sets $X(B)$, $S(B)$ and $\cP(B)$, 
and also, for  each $v \in S(B)$ and $B\in B(v)$, 
some element $x(v,B)$ in the direction of $B$ with respect to $v$.
In the next section we shall present an 
algorithm for doing precisely this.

\begin{algorithm}[htp]
\label{alg:add:partition}
%  \FuncSty{add\_partition}($\blist$,$P$)\\
 \KwIn{The set 
$\blist = \{(X(B),S(B),\cP(B)) \,:\, B \mbox{ a block of } Q(\cP) \}$ for a partition system $\cP$
and, for each $v \in S(B)$ and $B\in B(v)$, 
some element $x(v,B)$ in the direction of $B$ with respect to $v$,
together with some partition $P\not\in\cP$.}
 \KwOut{The same data for $\cP\cup\{P\}$.}
  Create a new block $C$ with $X(C)=X$, $S(C)=\emptyset$, $\cP(C)=\{P\}$\;
  Create a new extra vertex $v$\;
  $\IB\leftarrow\emptyset$\;
  \ForEach{$B\in\blist$} {
    \If{!\FuncSty{is\_compatible}$(P,B)$} {
      Add $B$ to $\IB$\;
    }
    \Else{
      Choose some $Q\in\cP(B)$\;
   $X(B)\leftarrow X(B)\cap B(P,Q)$\;
    \If{$X(C)\cap X(B)=\emptyset$} {\label{line:add:extra-point}       
    Choose some $x\in B(P,Q)$ and some $y\in B(Q,P)$\;
    \If{There exists some $v\in S(C)$ such that $x(v,B)$ and $x$ are in the same part of $P$} {
     \label{line:find:extra-point}         $w\leftarrow v$\;
         }
         \ElseIf{There exists some $v\in S(B)$ such that $x(v,C)$ and $y$ are in the same part of $Q$} {
     \label{line:find:extra-point2}         $w\leftarrow v$\;
         }
         \Else {
           $w\leftarrow$new extra vertex\;
         }
         $x(w,B)\leftarrow$\FuncSty{add\_extra\_vertex}($w$,$B$)\;
         $x(w,C)\leftarrow$\FuncSty{add\_extra\_vertex}($w$,$C$)\;
      }
    }
  }
  \If{$\IB\not=\emptyset$} {
     \FuncSty{add\_blocks}$(C,\IB)$\;
  }
%  \If{$X(C)\cap S(C)=\emptyset$} { 
%     $w\leftarrow$\FuncSty{find\_splitted\_point}$(P,E)$\;
%     \FuncSty{create\_block\_extra\_vertices}$(C,f)$\;
%     Delete the extra vertex $f$\;
%  }
\KwRet{$\blist \cup \{(X(C),S(C),\cP(C))\}$ \emph{and the elements} $x(w,C)$ }\;
\caption{Algorithm to add a partition.}
\end{algorithm}

\section{Computing the block decomposition of
a quasi-median graph}\label{sec:compute_block}

We now present our approach to computing the
block decomposition of a partition system $\cP$ following the
strategy presented at the end of the last section.
We start with the block decomposition 
of an empty set of partitions on $X$ (which is 
itself empty) and iteratively 
add each $P \in \cP$ to build up the decomposition.
In particular, at each stage, for each 
block $B$ (either existing or new) we compute
the sets $X(B)$, $S(B)$, $\cP(B)$, together with  
elements $x(v,B)$, $v \in S(B)$, $B\in B(v)$.  
To do this we use Algorithm~\ref{alg:add:partition},
the main elements for which are as follows. 

First, for each given block $B$, we check whether or not
there exists some partition in $\cP(B)$ that is not strongly 
compatible to the newly added partition $\cP$ and 
thereby also compute which elements of $X$ must 
be added to our new block. This is done in the 
function \texttt{is\_compatible} described in 
Algorithm~\ref{alg:is_compatible}. This function 
returns \TRUE~if the new partition $P$ is strongly
compatible to all partitions $Q$ in the block $B$. 
All blocks $B$ with \texttt{is\_compatible}$(P,B)$=\TRUE~remain 
blocks for the new block decomposition, and all other blocks 
are joined (together with $P$) to form a new block that is 
added to the decomposition. This is done in the 
function \texttt{join\_blocks} outlined 
in Algorithm~\ref{alg:join_blocks}.

We now prove that this approach really works:

\begin{thm}\label{thm:correct}
Algorithm~\ref{alg:add:partition} is correct.
\end{thm}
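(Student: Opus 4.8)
The plan is to prove correctness by induction on $\card\cP$, following the iterative structure of the algorithm: assuming that the input data $\blist$ correctly encodes the block decomposition of $Q(\cP)$, I would show that after running Algorithm~\ref{alg:add:partition} with a new partition $P$, the returned data correctly encodes the block decomposition of $Q(\cP\cup\{P\})$. The correctness of the combinatorial bookkeeping is governed by Corollary~\ref{cor:partitions}: the new partition $\Part(\cP\cup\{P\})$ either leaves a block $B$ untouched (when every $Q\in\cP(B)$ is strongly compatible to $P$) or merges all incompatible blocks together with $P$ into a single new block. The function \texttt{is\_compatible}$(P,B)$ is precisely the test that decides which case each $B$ falls into, and the two branches of the main loop correspond exactly to the two cases of Corollary~\ref{cor:partitions}.

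First I would verify the base case: when $\cP=\emptyset$, the decomposition is empty, and adding the single partition $P$ produces the block $C$ with $X(C)=X$, $S(C)=\emptyset$, $\cP(C)=\{P\}$, which is correct since $Q(\{P\})\isom K_{\card P}$ is a single block by Theorem~\ref{thm:non-compatible}~(ii). For the inductive step, I would argue in three parts. First, the blocks $B$ passing the compatibility test are carried over unchanged into the new decomposition; here I must check that $X(B)$ is correctly updated by the assignment $X(B)\leftarrow X(B)\cap B(P,Q)$, namely that the elements of $X$ labelling vertices of $B$ are exactly those on the side $B(P,Q)$ of the new partition. Lemma~\ref{lem:Bequal} is what guarantees this update is independent of the choice of $Q\in\cP(B)$, so the algorithm's arbitrary choice of $Q$ is legitimate. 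Second, the incompatible blocks collected in $\IB$, together with $P$, are fused by \texttt{add\_blocks} into the single new block $C$; here I invoke Theorem~\ref{thm:non-compatible}~(ii) (via the merging clause of Corollary~\ref{cor:partitions}) to see that all edges corresponding to mutually incompatible partitions lie in one block.

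The delicate part of the argument, and what I expect to be the main obstacle, is the correct creation and tracking of the \emph{extra vertices} and their witnesses $x(v,B)$. When a compatible block $B$ loses some of its $X$-labelled vertices to the new block $C$ (detected by the test $X(B)\not\subseteq B(P,Q)$ and $X(C)\cap X(B)=\emptyset$ on line~\ref{line:add:extra-point}), a new cut vertex separating $B$ from $C$ is created, and it must be recorded as an extra vertex in both $S(B)$ and $S(C)$ with appropriate direction-witnesses. I would need to show that the test on line~\ref{line:find:extra-point} correctly decides whether this cut vertex coincides with a previously created extra vertex $v\in S(C)$ --- equivalently, whether two blocks attach to $C$ at the same cut vertex --- and that reusing $v$ in that case, versus allocating a fresh vertex otherwise, faithfully reproduces the incidence structure of $Q(\cP\cup\{P\})$. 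The key geometric fact to establish is that $x$ chosen from $B(P,Q)$ is genuinely in the direction of $B$ with respect to the new cut vertex, which follows because $B(P,Q)$ is exactly the part of $X$ lying on the $B$-side of the newly split edge; combined with Lemma~\ref{lem:cpcpr}, this witness data then suffices to recover $\cP_r$, completing the verification that the returned tuple is the block decomposition of $\cP\cup\{P\}$.
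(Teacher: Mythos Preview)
Your plan is correct and follows essentially the same inductive route as the paper's own proof: both arguments use Corollary~\ref{cor:partitions} to justify the split into the compatible/incompatible branches, verify that $X(B)$ and $\cP(B)$ are updated correctly in each branch (the paper appeals to Theorem~\ref{thm:non-compatible}(i) on the pair $\{P,Q\}$ where you argue directly about the side $B(P,Q)$), and then treat the bookkeeping of extra vertices and their direction-witnesses as the remaining delicate part. Your explicit invocation of Lemma~\ref{lem:Bequal} to justify the arbitrary choice of $Q\in\cP(B)$ is a point the paper leaves implicit, so if anything your outline is slightly more careful there.
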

\begin{proof}
We first show that if the sets $\cP(B)$ and $X(B)$ 
have been computed correctly for all blocks $B$ of
the quasi-median graph of the
partition system $\cP \setminus\{P\}$, then they are
correct for all blocks $Q(\cP)$. 

To see that all $\cP(B)$ are correct, 
note that the set $\cP(C)$ for the new block $C$ is 
initialised as $\{P\}$ and in the function \texttt{add\_blocks} 
all partitions of blocks containing partitions not strongly compatible 
to $P$ are added and the corresponding blocks deleted. 
Hence, it follows from Corollary~\ref{cor:partitions}
that $\cP(B)$ is correct for all blocks of $Q(\cP)$.

% {\bf Observation:} Let $P$ and $Q$ be strongly compatible partitions 
% on a set $X$. Then the quasi-median graph $Q(\{P,Q\})$ 
% has two blocks $B,C$ with $\cP(B)=\{P\}$, $X(B)=B(Q,P)$ 
% and $\cP(C)=\{Q\}$, $X(Q)=B(P,Q)$.
%
% Direct Proof: By Theorem~\ref{thm:qm-characterisation}, for all 
% vertices $v:\{P,Q\} \to \cP(X)$ of $Q(\{P,Q\})$ 
% we need to have either $v(P)=B(P,Q)$ or $v(Q)=B(Q,P)$. 
% This immediately implies that the vertex set of $Q(\{P,Q\})$ is the union of 
% \[
% B_P=\SetOf{v:\{P,Q\} \to \cP(X)}{v(P)=B(P,Q)}
% \]
% and
% \[
% B_Q=\SetOf{v:\{P,Q\} \to \cP(X)}{v(Q)=B(Q,P)}\,.
% \]
% The intersection of $B_P$ and $B_Q$ consists 
% solely of the map $w$ with $w(P)=B(P,Q)$ 
% and $w(Q)=B(Q,P)$. It follows from the 
% definition of the quasi-median graph that 
% $E=\{v_1,v_2\}$ is an edge of $Q(\{P,Q\})$ 
% if and only if $E\subseteq B_P$ or $E\subseteq B_Q$. 
% Hence $w$ is the sole cut vertex of $Q(\{P,Q\})$ 
% and the blocks have vertex sets $B_P$ and $B_Q$ with 
% all edges corresponding to $P$, $Q$, respectively. 
% Noting that $\pi(x)\in B_P$ if and only if $x\in B(P,Q)$ 
% and $\pi(x)\in B_Q$ if and only if $x\in B(Q,P)$ 
% completes the proof of the lemma.

We now turn to the correctness of the set
$X(B)$. Consider first a block $B$ for which 
every partition $Q\in \cP(B)$ is strongly compatible 
with $P$. The elements of $X(B)$ stay in $X(B)$ if they are 
in $B(P,Q)$, and similarly move to $X(C)$ 
for the new block $C$ if they are 
in $B(Q,P)$. But, by Theorem~\ref{thm:non-compatible}~(i),
the quasi-median graph $Q(\{P,Q\})$ 
has two blocks $B_1,B_2$ with $\cP(B_1)=\{P\}$, $X(B_1)=B(P,Q)$ 
and $\cP(B_2)=\{Q\}$, $X(B_2)=B(Q,P)$.
It follows that $X(B)$ is correct. 
Otherwise, if some $Q\in \cP(B)$ is not strongly
compatible to $P$, then the corresponding block is 
deleted and all elements are simply joined to those 
in $X(C)$, as required. So, using a similar 
argument for $Q(\{P,Q\})$, it follows 
that $X(C)$ is also correct.

It remains to show that the blocks are 
added in a proper way, that is, all of the 
extra vertices are contained in the blocks that they 
really belong to. This is taken care of by the 
condition in Line~\ref{line:add:extra-point} of 
Algorithm~\ref{alg:add:partition}: There is no 
need to add extra vertices for adding two 
blocks if they already share an element of $X$ 
and having $X(B)\not\subseteq B(P,Q)$ ensures 
that blocks are only added if needed. Moreover, 
Algorithm~\ref{alg:connect_extra_point} 
ensures that elements in the direction of some 
block are computed.  Indeed,
suppose all existing $x(\cdot,\cdot)$ are correct. 
To see that Algorithm~\ref{alg:connect_extra_point} returns 
an element of $x$ that is in the direction of $B$
first note that if $x\in X(B)$ and $X(B)\not=\emptyset$, then $x$ 
is clearly in the direction of $B$ with respect to $v$. 
Furthermore, every $w\in S(B)\setminus\{v\}$ is in the 
direction of $B$ with respect to $v$ and so every 
element in the direction of any $C\in B(w)\setminus\{B\}$ 
with respect to $w$ is in the direction of $B$ with respect to $v$.
This completes the proof of the theorem.
\end{proof}

\begin{algorithm}[ht]
\label{alg:is_compatible}
\FuncSty{is\_compatible}$(P,B)$\\
\ForEach{Partition $Q\in \cP(B)$} {
   \If{$P$ and $Q$ are strongly compatible} {
      $X(C)\leftarrow X(C)\cap B(Q,P)$\;
   }
   \Else{
      \KwRet{$\FALSE$}\;    
   }
}
\KwRet{$\TRUE$}\;
\caption{Check if a partition is strongly compatible with all 
partitions arising from a block.}
\end{algorithm}

\begin{algorithm}[ht]
\label{alg:join_blocks}
     \FuncSty{add\_blocks}$(C,\IB)$\\
     $X(C)\leftarrow\emptyset$\;
     \ForEach{$B\in \IB$} {
       Remove $B$ from $\blist$\;
       $X(C)\leftarrow X(C)\cup X(B)$\;
       $\cP(C)\leftarrow \cP(C)\cup \cP(B)$\;          
       \ForEach{$w\in S(B)$} {
         Add $w$ to $S(C)$\;
         $x(w,C)\leftarrow x(w,B)$\;
       }
     }
 \ForEach{$w\in S(C)$} {
       \If{$B(w)\subseteq \IB\cup \{C\}$} {
          Delete the extra vertex $w$ from $S(C)$\;
       }
     }
\caption{Add all blocks not strongly compatible to $P$.}
\end{algorithm}

\begin{algorithm}
\label{alg:connect_extra_point}
  \FuncSty{add\_extra\_vertex}($v$,$B$)\\
  Add $v$ to $S(B)$\;
  \If{ $X(B)\not=\emptyset$} {
     Choose some $x\in X(B)$\;
     \KwRet{$x$}\;
  }
  Choose some $w\in S(B)\setminus\{v\}$\;
  Choose some $C\in B(w)\setminus\{B\}$\; 
  \KwRet{$x(w,C)$}\;
\caption{Add an extra vertex to a block.}
\end{algorithm}

We conclude with an analysis of 
the run time of Algorithm~\ref{alg:add:partition}. 
First, we compute the time needed to check whether two partitions are strongly compatible.

\begin{lem}\label{lem:check_comp}
Let $P$ and $Q$ be partitions of $X$ with $\card X=n$. Then  checking strong 
compatibility and computing $B(P,Q)$ and $B(Q,P)$ in case 
they are strongly compatible can be done in time $\bigO(n)$.
\end{lem}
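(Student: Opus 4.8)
The plan is to bound the cost of checking strong compatibility directly from the definition. Recall that $P$ and $Q$ are strongly compatible if and only if there exist $A \in P$, $B \in Q$ with $A \cup B = X$, equivalently with $(X \setminus A) \cap (X \setminus B) = \emptyset$, i.e. the complements are disjoint. So first I would reformulate the task as searching, over all pairs $(A,B)$ with $A \in P$ and $B \in Q$, for a pair whose union is all of $X$; there are $k_1 k_2$ such pairs, and the whole point is that testing one pair and, when it succeeds, recording $A = B(P,Q)$ and $B = B(Q,P)$, can be done in $\bigO(n)$.

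The key step is to show that each individual pair $(A,B)$ can be tested in time $\bigO(n)$. Since $A, B \subseteq X$ and $\card X = n$, we may represent each part as a characteristic vector of length $n$ (assuming membership can be queried in constant time, which is the standard bit-vector or boolean-array model). Then $A \cup B = X$ is checked by scanning all $n$ elements of $X$ once and verifying that each lies in $A$ or in $B$; this is a single $\bigO(n)$ pass. Iterating over all $k_1 k_2$ pairs therefore takes $\bigO(k_1 k_2 n)$ time in total. As soon as some pair satisfies $A \cup B = X$ we halt, declare $P$ and $Q$ strongly compatible, and set $B(P,Q)=A$, $B(Q,P)=B$, which costs nothing extra since $A$ and $B$ are already in hand; if no pair succeeds, we declare them not strongly compatible. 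The uniqueness of $A$ and $B$ for distinct strongly compatible partitions (noted just before Lemma~\ref{lem:Bequal}) guarantees that the recorded sets are well defined.

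The main obstacle, and really the only point requiring care, is the choice of computational model: the bound $\bigO(k_1 k_2 n)$ relies on being able to test membership $x \in A$ in constant time and to enumerate the elements of each part in linear time. I would therefore state explicitly that we assume the parts are stored so that membership queries take constant time (for instance, via a length-$n$ indicator array per part, or equivalently by precomputing for each $x \in X$ the index of the part of $P$ and of $Q$ containing it, which itself costs only $\bigO(n)$). Under this model the analysis above is immediate, and the stated run time $\bigO(k_1 k_2 n)$ follows.
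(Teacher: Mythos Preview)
Your proposal is correct and follows essentially the same argument as the paper: iterate over all $k_1 k_2$ pairs $(A,B)$ with $A\in P$, $B\in Q$, test $A\cup B=X$ in $\bigO(n)$ time, and output the witnessing pair if one is found. The only difference is that you spell out the computational-model assumption (constant-time membership queries) more explicitly than the paper does, which is fine.
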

\begin{proof}
We can rename the elements of $X$ in such a way that $X=\{s_1,\dots,s_n\}$ and the elements of $P$ are all intervals of the sequence $s_1,\dots,s_n$, that is, of the form $\{s_i,s_{i+1},\dots,s_{j-1},s_j\}$ for some $1\leq i\leq j\leq n$. This relabelling can be done in time linear in $n$. Next, we fix some order $\phi$ (that is a bijection $\phi: Q \to \card Q$) on $Q$ and define a sequence $S_Q$ of length $n$ where the $i$th element of the sequence is $\phi(A)$, if $i\in A\in Q$.  By going through the elements of $Q$, we can construct this sequence in time linear in $n$ and independent of $\card Q$.

By the construction of this sequence, we have that for any element $A=\{s_i,s_{i+1},$ $\dots,s_{j-1},s_j\}\in P$, there exists some $B\in Q$ with $A\cup B=X$ if and only if  all $\alpha<i$ and $\alpha>j$ have the same value in the sequence $S_Q$, that is, if $S_Q$ has a constant prefix of length at least $i-1$ and a constant suffix of length at least $n-j$ and those two have the same value. Hence, to check whether $P$ and $Q$ are strongly compatible, it now suffices to compute the maximum length constant prefixes and suffixes of $S_Q$ and then check for each $A\in P$ whether the above condition is fulfilled; both can be done in time linear in $n$. In case one $A\in P$ fulfilling the condition is found, we also know that $B(P,Q)=A$ and $B(Q,P)=\phi^{-1}(c)$ where $c$ is the constant of the prefix/suffix of $S_Q$
\end{proof}

\begin{thm} \label{thm:run_time}
The algorithm computes the block decomposition of a partition 
system $\cP$ on $X$ in time $\bigO(n^2m^2)$, where 
$n=\card X$ and  $m=\card \cP$.
\end{thm}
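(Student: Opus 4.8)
The plan is to bound the total work done by the main loop of Algorithm~\ref{alg:add:partition} over the course of all $m=\card\cP$ insertions. Since we add partitions one at a time, the outer structure is a loop of $m$ calls to the procedure of Algorithm~\ref{alg:add:partition}, so the overall bound will be $m$ times the cost of a single insertion; I therefore aim to show each insertion costs $\bigO(k^2n^2m)$.

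First I would analyse a single call. The body iterates over all current blocks $B\in\blist$, and by Corollary~\ref{prop:num-cut vertices} the number of blocks is $\bigO(n)$ throughout, so there are $\bigO(n)$ iterations. Each iteration calls \texttt{is\_compatible}$(P,B)$, which loops over every $Q\in\cP(B)$ and, for each, checks strong compatibility of $P$ and $Q$. By Lemma~\ref{lem:check_comp} a single such check together with computing $B(P,Q)$, $B(Q,P)$ costs $\bigO(k^2n)$, since $\card P,\card Q\leq k$. The total number of pairs $(P,Q)$ examined across all blocks in one insertion is bounded by $\sum_B\card{\cP(B)}\leq\card\cP\leq m$, because the sets $\cP(B)$ partition $\cP$ (this is the partition $\Part(\cP)$ guaranteed after Theorem~\ref{thm:bijection}). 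Hence the compatibility testing over one insertion costs $\bigO(k^2nm)$. Multiplying by the $m$ insertions gives $\bigO(k^2nm^2)$ for all compatibility work — already within budget, and in fact smaller than the claimed bound.

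Next I would account for the bookkeeping: the intersections $X(B)\leftarrow X(B)\cap B(P,Q)$ and $X(C)\leftarrow X(C)\cap B(Q,P)$, the calls to \texttt{add\_blocks} (Algorithm~\ref{alg:join_blocks}), and the calls to \texttt{add\_extra\_vertex} (Algorithm~\ref{alg:connect_extra_point}). Each set intersection on subsets of $X$ costs $\bigO(n)$, and these occur $\bigO(n)$ times per insertion (once per block), contributing $\bigO(n^2)$ per insertion. The routine \texttt{add\_blocks} merges at most $\bigO(n)$ blocks, each merge copying sets of size $\bigO(n)$ for $X(B)$ and $S(B)$ and $\bigO(m)$ for $\cP(B)$, so it costs $\bigO(n^2+nm)$ per insertion. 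The extra-vertex handling checks, in Line~\ref{line:find:extra-point}, whether $x(v,B)$ and $x$ lie in the same part of $P$ for $v\in S(C)$; since $\card{S(C)}=\bigO(n)$ by Corollary~\ref{prop:num-cut vertices} and each membership test is $\bigO(n)$, this is $\bigO(n^2)$ per insertion, and \texttt{add\_extra\_vertex} itself is $\bigO(n)$ since it only inspects $X(B)$ or picks a witness. Summed over $m$ insertions, all of this bookkeeping is dominated by the compatibility cost.

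The main obstacle, and the place I would be most careful, is the factor of $n$ hidden inside the block count: one must invoke Corollary~\ref{prop:num-cut vertices} to guarantee $\bigO(n)$ blocks and $\bigO(n)$ extra vertices \emph{at every stage}, not just at the end, and to confirm that the sets $\cP(B)$ genuinely partition the current $\cP$ so that $\sum_B\card{\cP(B)}\leq m$ rather than being double-counted. With those structural facts in hand the bound is really a matter of multiplying the per-pair cost $\bigO(k^2n)$ of Lemma~\ref{lem:check_comp} by the $\bigO(nm)$ pairs examined per insertion (block count times partition count) and by the $m$ insertions. This yields $\bigO(k^2n\cdot nm\cdot m)=\bigO(k^2n^2m^2)$, matching the claimed run time, with every other component of the algorithm absorbed into this dominant term. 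Finally, I would note that converting the stored $\cP(B)$ into the reduced systems $\cP_r(B)$ at the end, via Lemma~\ref{lem:cpcpr}, costs only $\bigO(nm)$ per block and $\bigO(n^2m)$ overall, which is again dominated.
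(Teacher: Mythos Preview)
Your approach is essentially the paper's: bound one call to Algorithm~\ref{alg:add:partition} by $\bigO(k^2n^2m)$ using Lemma~\ref{lem:check_comp} together with $\sum_B\card{\cP(B)}\le m$ and the $\bigO(n)$ bound on blocks and extra vertices from Corollary~\ref{prop:num-cut vertices}, then multiply by $m$ insertions and absorb the final $\cP_r(B)$ conversion via Lemma~\ref{lem:cpcpr}.

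One small slip to fix: the test in Line~\ref{line:find:extra-point} costs $\bigO(n^2)$ \emph{per block iteration}, not per insertion, since it sits inside the \texttt{ForEach} over $\blist$. The paper handles this by bounding the number of iterations by $m$ (each block contains at least one partition), giving $\bigO(n^2m)$ per insertion for this line, which is what makes the per-insertion total $\bigO((k^2n+n^2)m)\subseteq\bigO(k^2n^2m)$. Your intermediate claim that the bookkeeping is dominated by the $\bigO(k^2nm)$ compatibility cost is therefore not quite right, but your final catch-all bound $\bigO(k^2n^2m^2)$ does absorb $\bigO(n^2m^2)$, so the conclusion stands once you correct that one line.
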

\begin{proof}
We claim that  Algorithm~\ref{alg:add:partition} 
runs in time $\bigO(n^2m)$. Since this algorithm is 
executed once for each partition, the 
theorem then follows by Lemma~\ref{lem:cpcpr} and Corollary~\ref{prop:num-cut vertices}.

It follows from Lemma~\ref{lem:check_comp} that the  
function \texttt{is\_compatible} in 
Algorithm~\ref{alg:is_compatible} runs in time 
$\bigO(n\cdot \card{\cP(B)})$. The rest of the 
first loop in Algorithm~\ref{alg:add:partition} is 
dominated by the conditions in Lines~\ref{line:find:extra-point} and~\ref{line:find:extra-point2}. 
However, since the number of extra vertices of $Q(\cP)$ is 
linear in $n$ by Proposition~\ref{prop:num-cut vertices}, 
this test can be performed in $\bigO(n^2)$. Since each 
partition can only be in one block, this shows that the 
loop in Algorithm~\ref{alg:add:partition} 
needs $\bigO((n+n^2)m)=\bigO(n^2m)$ time. For the 
function \texttt{add\_blocks} the run time of the 
first loop is bound by $\bigO(n^2)$, taking into account 
that by Proposition~\ref{prop:num-cut vertices} the number 
of extra vertices and the number of blocks are linear in $n$. 
The same holds for the second loop, so \texttt{add\_blocks} 
runs in time $\bigO(n^2)$. Altogether, we get that
Algorithm~\ref{alg:add:partition} runs in 
time $\bigO(n^2m)$, as claimed. 
\end{proof}

Note that, translated into the language of sequences used in the
introduction, this results implies that the
block decomposition of the quasi-median graph 
of $n$ aligned sequences of length $m$
%overan alphabet with $l$ characters%
can be computed in time $\bigO(n^2m^2)$,
%as the size of each corresponding partition, 
%and hence $k$, is bounded by $l$.

\bibliographystyle{amsplain}
\bibliography{quasi-median}

\end{document}